\documentclass[11pt, reqno]{amsart}

\usepackage{amsthm,amssymb,amstext,amscd,amsfonts,amsbsy,amsrefs,amsxtra,latexsym,amsmath,xcolor,mathrsfs,fancybox,upgreek, soul,url}
\usepackage[english]{babel}
\usepackage[all,cmtip]{xy}
\usepackage[utf8]{inputenc}
\usepackage[T1]{fontenc}
\usepackage{cancel}
\usepackage[draft]{hyperref}

\usepackage{comment}
\usepackage{mdframed}
\allowdisplaybreaks
\usepackage{mathtools}
\usepackage{enumerate}
\usepackage{thmtools}
\usepackage{thm-restate}
\usepackage{chngcntr}
\usepackage{enumitem}
\usepackage{etoolbox}
\usepackage{todonotes}
\usepackage{stmaryrd}
\usepackage{soul}
\hypersetup{final}

\usepackage{tikz}
\usepackage{verbatim}
\usetikzlibrary{quotes,angles}

\DeclarePairedDelimiter\abs{\lvert}{\rvert}
\DeclarePairedDelimiter\norm{\lVert}{\rVert}

\makeatletter
\let\oldabs\abs
\def\abs{\@ifstar{\oldabs}{\oldabs*}}
\let\oldnorm\norm
\def\norm{\@ifstar{\oldnorm}{\oldnorm*}}
\makeatother

\makeatletter
\glb@settings 
\makeatother

\usepackage[top=1.25in, bottom=1.25in, left=0.8in, right=0.8in]{geometry}

\newtheorem{theorem}{Theorem}
\newtheorem{lemma}[theorem]{Lemma}

\newtheorem{proposition}[theorem]{Proposition}
\newtheorem{conjecture}[theorem]{Conjecture}
\newtheorem{definition}[theorem]{Definition}

\theoremstyle{remark}

\newtheorem*{remark}{Remark}

\numberwithin{theorem}{section}
\numberwithin{proposition}{section}
\numberwithin{lemma}{section}
\numberwithin{corollary}{section}
\numberwithin{equation}{section}
\numberwithin{conjecture}{section}
\numberwithin{definition}{section}

\setlist[enumerate,1]{before=}
\AfterEndEnvironment{enumerate}{}

\newcommand{\N}{\mathbb{N}}

\newcommand{\R}{\mathbb{R}}

\usepackage{bm}

\renewcommand{\bmod}[1]{\ ( \mathrm{mod} \, #1 )}

\title{On the digits of the sum of proper divisors}

\author{K\"{u}bra Benl\.{i}, C\'{e}cile Dartyge, Charlotte Dombrowsky, Paul Pollack,\\ and Lola Thompson}

\address{Bo\u{g}az\.{i}\c{c}\.{i} University, Department of Mathematics, Bebek, 34342, \.{I}stanbul, T\"{u}rk\.{i}ye}
\email{kubra.benli@bogazici.edu.tr}

\address{Institut \'{E}lie Cartan  de Lorraine  \& Institut Universitaire de France, Universit\'{e} de Lorraine, BP 70239, 54506 Vand\oe uvre-l\`{e}s-Nancy Cedex, France}
\email{cecile.dartyge@univ-lorraine.fr}

\address{Universit\"{a}t Bielefeld, Fakult\"{a}t f\"{u}r Mathematik, Postfach 100131,
33501 Bielefeld, Germany}
\email{cdombrow@math.uni-bielefeld.de}

\address{Department of Mathematics, University of Georgia, 
Athens, GA 30602, United States}
\email{pollack@uga.edu}

\address{Mathematics Institute, Utrecht University, Hans Freudenthalgebouw, Budapestlaan 6, 3584 CD Utrecht, The Netherlands}
\email{l.thompson@uu.nl}

\begin{document}

\begin{abstract}
    We study several probabilistic questions concerning the digits of $s(n)$, the sum of proper divisors of an integer $n$. In particular, we show that $s(n)$ obeys Benford’s law with respect to logarithmic density. Moreover, we show that, for every function $k(x) \rightarrow \infty$, almost all integers $n \leq x$ have every decimal digit occurring among the first $k(x)$ digits and the last $k(x)$ digits of $s(n).$
    
    We also present an upper bound for the number of composite integers $n$ up to $x$ for which $s(n)$ is missing at least one digit in its decimal expansion. This is in contrast with the main result of a recent paper of Benli, Cesana, Dartyge, Dombrowsky, and Thompson, in which the inputs $n$ were not required to be composite. It turns out that the primes make a substantial contribution to the preimage set $s^{-1}(\mathcal{A})$, where $\mathcal{A}$ is a set of integers with missing digits. Our result for composite $n$ shows that the count is much smaller when prime inputs are excluded. 
  
\end{abstract}

\maketitle

\section{Introduction}

Let $s(n)$ denote the sum of proper divisors of a positive integer $n$. The function $s(n)$ has a long and rich history. Pythagoras first observed that $s(6) = 6$ and $s(28) = 28.$ He called such numbers \textit{perfect}. The study of perfect numbers, that is, integers $n$ for which $s(n) = n$, has played an important role in number theory. It has even been remarked \cite{pomerance} that perhaps $s(n)$ is the ``first function'' to ever have been studied in mathematics. 

The goal of this paper is to study the digits of $s(n)$ from a probabilistic standpoint. Observe that, if $n = 2^{5865}$, then $s(n) = 2^{5865} - 1.$ The last $10$ decimal digits of $s(n)$ are $5046879231$, so every decimal digit appears. It is natural to wonder how common this phenomenon is. We show that it is actually very common, provided that we allow the number of ``last'' decimal digits to be sufficiently large. We also show that the ``first'' decimal digits have the same property. In fact, we establish these results for an arbitrary base $g\ge 2$.

\begin{theorem}\label{thm:firstlastk} Fix  $g\ge 2$.
 If $k(x) \to \infty$ as $x\to \infty$, then asymptotically $100\%$ of integers $n\leq x$ are such that $s(n)$ has all $g$ digits in base $g$ appearing among its last (i.e., the rightmost) $k(x)$  digits. Moreover, the same result holds for the first (i.e., the leftmost) $k(x)$ digits.
\end{theorem}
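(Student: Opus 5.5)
We may assume $k(x)\to\infty$ slowly; enlarging $k$ only makes the property easier to satisfy. We also replace $k(x)$ by $\min\{k(x),\log\log\log x\}$, so that $g^{k}$ is small compared with any fixed power of $\log x$.

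\emph{Last digits.} The last $k$ digits of $s(n)$ are determined by $s(n)\bmod{g^k}$. Partition the $k$ rightmost positions into $\lfloor k/m\rfloor$ blocks of length $m=m(x)\to\infty$ very slowly. It suffices to show that $s(n)\bmod{g^k}$ is close to uniformly distributed, in the sense that for almost all $n$ the residue avoids the set $B_k$ of residues missing some digit. We have $|B_k|/g^k\le g(1-1/g)^k\to 0$. So it is enough to prove an equidistribution statement: for each fixed modulus $q=g^k$ with $k\to\infty$ slowly,
\[
\#\{n\le x: s(n)\equiv a \bmod{q}\}=\frac{x}{q}(1+o(1))+o(x/q)
\]
uniformly in $a$, or at least in an averaged sense over $a\in B_k$. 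For this, write $n=mp$ with $p=P(n)$ the largest prime factor, so for almost all $n$ we have $p\nmid m$, $p>x^{1/\log\log x}$ and $q\mid\sigma(m)$-type degeneracies are avoidable. Then $s(n)=\sigma(m)(p+1)-mp=p\,s(m)+\sigma(m)$. For fixed $m$ with $\gcd(s(m),q)=1$, the condition $s(n)\equiv a$ becomes $p\equiv (a-\sigma(m))s(m)^{-1}\bmod q$. By Siegel--Walfisz (since $q\le(\log x)^{A}$), the primes $p\in(P(m),x/m]$ are equidistributed in these classes. Summing over $m$ gives the required count with relative error $o(1)$, uniformly in $a$.

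It remains to show that the $m$ with $\gcd(s(m),q)>1$ form a negligible proportion. This is the main obstacle: for $g$ even, $s(m)$ is even quite often, and indeed $\gcd(s(m),g)>1$ for a positive proportion of $m$. I would handle this by not requiring coprimality. Let $d=\gcd(s(m),q)$; then $p$ ranges over a residue class mod $q/d$, and $s(n)$ hits residues in a coset $\sigma(m)+d\mathbb{Z}$. Instead of using $p$ alone, I would use the two largest primes $n=mp_2p_1$. Then $s(n)=p_1p_2s(m)+(p_1+p_2)\sigma(m)+\sigma(m)$-type expressions give enough freedom. Alternatively, I would restrict to $m$ with $\nu_\ell(s(m))$ bounded for each $\ell\mid g$, and show that the exceptional $m$, those with $\ell^{j}\mid s(m)$ for large $j$, have density $\to0$ as $j\to\infty$. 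Then $d$ is bounded by $g^{O(j)}$, and the digits in positions above $O(j)$ are equidistributed, which still forces all digits to appear among the last $k$ positions.

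\emph{First digits.} I would use the same factorization $s(n)=p\,s(m)+\sigma(m)$, where $\sigma(m)\le m\log\log m\ll$ the main term, since $p$ is large. The leading $k$ digits of $s(n)$ are then determined by $\log_g(p\,s(m)+\sigma(m))\bmod 1$, up to the boundaries of a fixed window. By the prime number theorem in short intervals of relative length $g^{-k}(\log x)^{-1}$, the primes $p$ in $(P(m),x/m]$ are equidistributed in the sense of $\log_g p \bmod 1$ at scale $g^{-k}$. Note that natural density fails for Benford's law, but here we only need the proportion of $p$ landing in a union of intervals of total measure $\le g(1-1/g)^k$ to be $o(1)$. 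Since the range $\log(x/m)$ has length $\gg\log x/\log\log x\to\infty$ in log-scale relative to the mesh, this holds. Summing over $m$ finishes the proof.
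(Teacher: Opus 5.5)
Your last-digits argument breaks down exactly where you flag the obstacle, and neither proposed fix is carried out. The idea you are missing is Lemma~\ref{lemma:oldpaper}: $\#\{n\le x: q\nmid\sigma(n)\}\ll x/(\log x)^{1/\varphi(q)}$. Take $q=g^k$, with $k$ shrunk so that $\varphi(g^k)=o(\log\log x)$. Then almost every $n\le x$ has $g^k\mid\sigma(n)$, hence $s(n)=\sigma(n)-n\equiv -n\pmod{g^k}$. So the last $k$ digits of $s(n)$ are those of $-n \bmod g^k$. Apart from the exceptions, the bad $n$ lie in at most $g(g-1)^k$ residue classes mod $g^k$, so there are at most $g(g-1)^k(x/g^k+1)=o(x)$ of them. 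That is the paper's whole proof; it needs no primes and no Siegel--Walfisz.

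The same fact shows why your decomposition runs into trouble. For typical $m$ one has $\sigma(m)\equiv 0$ and $s(m)\equiv -m\pmod q$, so $\gcd(s(m),q)=\gcd(m,q)$ and $s(mp)\equiv -mp$. For $m$ coprime to $q$ the residue $s(mp)\bmod q$ is therefore a unit. Hence your coprime-case conclusion ``$\frac{x}{q}(1+o(1))$ uniformly in $a$'' is false: for $g=10$ these $m$ contribute essentially nothing to even $a$. The non-unit residues come precisely from the $m$ you wanted to discard. The two-prime variant $s(n)=p_1p_2s(m)+(p_1+p_2+1)\sigma(m)$ has the same obstruction, since again $s(n)\equiv -n$.

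Your fallback is structurally sound: restrict to $\nu_\ell(s(m))<j$ and equidistribute the digits above position $j+1$ via Siegel--Walfisz modulo $q/d$. But its key input is asserted without argument: that $\ell^j\mid s(n/P^+(n))$ for a proportion of $n\le x$ tending to $0$ as $j\to\infty$. The natural proof of that input is that $\ell^j\mid\sigma(n/P^+(n))$ for almost all $n$. That is the very fact that makes the prime-distribution machinery unnecessary.

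Your first-digits argument takes a genuinely different route from the paper and is viable after one repair. The paper avoids primes altogether. For almost all $n$, $s(n)/n$ lies within $z^{-1/2}$ of $s(n_z)/n_z$, where $z=g^{10k}$ and $n_z$, the $z$-smooth part of $n$, is fixed by $n\bmod M$ with $M=\prod_{p\le z}p^z$. Fixing $n\bmod M$ and the leading block $L$ then confines $n$ to an interval of relative length $O(g^{-k})$ within one class mod $M$. Summing over the $\ll g^{ck}$ ($c<1$) blocks missing a digit gives $o(x)$.

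You instead use that $s(mp)$ is affine in $p$, and you correctly need only an upper bound (Brun--Titchmarsh on $p$-intervals of relative length $\asymp g^{-k}$). That is all one can hope for, since natural-density Benford fails. The gap is that for fixed $m$ the $p$-range is $(P^+(m),x/m]$, not $(1,x/m]$. Your justification that $\log(x/m)$ is large does not stop this range from being short relative to the mesh, and then the bad proportion for that $m$ can be $1$. You must discard the $n$ with $P^+(n)\le(1+\delta)P^+(n/P^+(n))$, say with $\delta=g^{-k/2}$. This set is $o(x)$: after removing $y$-smooth $n$, bound $\sum_{p_2<p_1\le(1+\delta)p_2}x/(p_1p_2)$ by Brun--Titchmarsh, giving $\ll x\delta/\log y$. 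Separately, $\sigma(m)$ need not be negligible against $ps(m)$ (take $m$ prime), but you don't need that, since you can work with the affine map directly.
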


This suggests that the base $g$ expansion of $s(n)$ behaves in many respects like that of a random integer of comparable size. 

Theorem \ref{thm:firstlastk} concerns the occurrence of digits in prescribed positions of the decimal expansion of $s(n)$. A complementary question is to determine how the leading digits are distributed. The second goal of this paper is to show that the digits of $s(n)$ obey Benford's law with respect to logarithmic density.

Fix a base $g\ge 2$. We say that ``Benford's law'' (in base $g$) holds for a sequence of positive real numbers if, for each positive integer $D$, we have $D$ appearing as a block of leading digits of the terms of the sequence with limiting frequency $P(D):= \log_{g}(D+1) - \log_{g}(D) = \log_{g}(1 + \frac{1}{D})$. For example, if a sequence obeys Benford's law in base $10$, then $1$ appears as the leading digit with limiting frequency $P(1) = \log_{10}(2) = 30.1\%$, $7$ appears with frequency $P(7) = \log_{10}(8/7) = 5.1\%$, and the leading digits $2026$ appear with frequency $\log_{10}(2027/2026) = 0.021\%$. This version of Benford's law is called ``strong Benford'' by Diaconis \cite{diaconis77}, but we will simply use the term ``Benford'' here, as it is the only version of Benford's law to be discussed. 

\begin{theorem}\label{th:Benford} Fix $g\ge 2$. The function $s(n)$ satisfies Benford's law with respect to logarithmic density. In other words, for each positive integer $D$, $$\delta_{\log} \{n>1: \text{the leftmost base-$g$ digits of $s(n)$ are the digits of $D$}\} = \log_{g} \left(1 + \frac{1}{D}\right).$$
\end{theorem}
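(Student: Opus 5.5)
Write $s(n)=n\bigl(\sigma(n)/n-1\bigr)$, so that
\[
\log_g s(n) = \log_g n + \log_g\bigl(\sigma(n)/n - 1\bigr).
\]
The leading digits of $s(n)$ are the digits of $D$ exactly when $\{\log_g s(n)\}\in[\{\log_g D\},\{\log_g (D+1)\})$, with the usual adjustment when $D$ is a power of $g$. It therefore suffices to show that $\{\log_g s(n)\}$ is equidistributed modulo $1$ with respect to logarithmic density. By Weyl's criterion, this reduces to proving, for each fixed integer $h\neq 0$,
\[
\frac{1}{\log x}\sum_{1<n\le x}\frac{1}{n}\, e\bigl(h\log_g s(n)\bigr) = \frac{1}{\log x}\sum_{1<n\le x}\frac{1}{n}\, n^{it}\,\bigl(\sigma(n)/n-1\bigr)^{it}\;\longrightarrow\;0,
\]
where $t=2\pi h/\log g$.

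The first key input is that $f(n):=\sigma(n)/n$ has a continuous limiting distribution; this is Davenport's theorem. The plan is to truncate $f$: fix $y$ and write $n=ab$, where $a$ is the $y$-smooth part of $n$ and $b$ is $y$-rough. Then $f(n)=f(a)f(b)$, and for all $n$ outside a set of small logarithmic density we have $f(b)=1+O(\epsilon)$. The standard argument gives this: $\sum_{n\le x} (f(b)-1)\ll x\sum_{p>y}1/p^2$, and large powers of primes contribute little. Since the limiting distribution of $f$ is continuous (Erd\H{o}s--Wintner/Davenport), the set where $f(n)-1<\eta$ has density $o_{\eta\to 0}(1)$; in fact $f(n)-1\ge 1/P^-(n)\cdots$, and more simply $f(n)=1$ only for $n=1$. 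On the complementary set, $\log(f(n)-1)$ is Lipschitz in $f$, so $(f(n)-1)^{it}$ may be replaced by $(f(a)-1)^{it}$ at cost $O_\eta(\epsilon)$. The sum then becomes, up to small error,
\[
\sum_{a\ y\text{-smooth}} \frac{(f(a)-1)^{it}}{a}\,a^{it}\sum_{\substack{b\le x/a\\ b\ y\text{-rough}}}\frac{b^{it}}{b}.
\]

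The second key input is the cancellation in the inner sum. For $t\neq0$ we have $\sum_{b\le z,\ b\ y\text{-rough}} b^{-1+it}=O_{y,t}(1)$, uniformly in $z$. This follows from partial summation using the fact that the $y$-rough numbers have a density $\prod_{p\le y}(1-1/p)$ with bounded error term, together with $\int^z u^{-1+it}\,du=O(1/|t|)$. The outer sum over $y$-smooth $a$ satisfies $\sum 1/a=\prod_{p\le y}(1-1/p)^{-1}<\infty$. Hence the whole double sum is $O_{y,t}(1)$, which is $o(\log x)$. Letting $x\to\infty$, then $\epsilon,\eta\to0$ (with $y$ chosen depending on $\epsilon$), gives the Weyl criterion. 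Here it is essential that we use logarithmic density: with natural density, the $n^{it}$ factor does not average out, since $x^{-1}\sum_{n\le x} n^{it}$ does not tend to $0$. This is why the theorem is stated with logarithmic density.

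I expect the main obstacle to be the approximation step. We need $(f(n)-1)^{it}\approx (f(a)-1)^{it}$ outside a set of logarithmic density $o(1)$, and this requires that $f(n)-1$ rarely be tiny. That in turn is exactly the statement that the distribution function of $\sigma(n)/n$ is continuous at $1$, i.e., that $D(1+\eta)\to 0$ as $\eta\to0$. This holds because the density of $n$ with $\sigma(n)/n<1+\eta$ is at most the density of $n$ with no prime factor below roughly $1/\eta$, which is $\prod_{p<1/\eta}(1-1/p)\to 0$. With this in hand, the exceptional sets contribute $o(\log x)$ to the logarithmic sums, since natural density $o(1)$ implies logarithmic density $o(1)$. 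This completes the plan.
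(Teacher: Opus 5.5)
Your argument is correct, and it takes a genuinely different route from the paper. The paper writes $s(n)=\sigma(n)\bigl(1-n/\sigma(n)\bigr)$ and expands $(1-u(n))^{i\alpha}$ as a binomial series. Because $u(n)=n/\sigma(n)$ can be arbitrarily close to $1$, it sorts $n$ by the prime power $\gamma(n)$ attached to the least prime factor, discards $\gamma(n)>B$ by a sieve bound, and truncates the series on the rest. It then shows that each multiplicative function $\mathbf{1}_{P^-(m)>p}\,\sigma(m)^{i\alpha}(m/\sigma(m))^k$ has logarithmic mean $0$. This uses the logarithmic Hal\'asz inequality (Proposition~\ref{Prop GS}) together with the convergence of $\sum_q q^{-1-i\alpha}$, an input of prime-number-theorem strength. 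You instead factor $s(n)=n\bigl(\sigma(n)/n-1\bigr)$, so that all the oscillation sits in $n^{it}$. Off a set of small density, you replace $\sigma(n)/n$ by its value on the $y$-smooth part $a$, using $\sigma(n)/n-1\ge 1/P^-(n)$ to stay away from the singularity of $\log(\cdot-1)$ at $1$. You are then left with a bounded function of $a$ times $\sum_{b\le z,\,P^-(b)>y}b^{-1+it}=O_{y,t}(1)$. That bound needs only the periodicity of $y$-rough integers modulo $\prod_{p\le y}p$ and $\int^z u^{-1+it}\,du=O(1/|t|)$. Your route avoids Hal\'asz, the binomial series, and any information about primes on $\Re s=1$, and it shows directly why logarithmic weights are needed. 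The paper's route instead runs the argument through a general mean-value theorem for multiplicative functions, at the cost of the series expansion and the partition by $\gamma(n)$.

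A few details should be tightened when you write it up.
\begin{itemize}
\item The first-moment bound should read $\sum_{n\le x}(\sigma(b)/b-1)\le x\sum_{d>y}d^{-2}\ll x/y$, since composite divisors of $b$ contribute too, not only primes.
\item Since $\sigma(n)/n$ is unbounded, the ``Lipschitz'' step should be done multiplicatively. From $\sigma(n)/n-\sigma(a)/a=(\sigma(a)/a)(\sigma(b)/b-1)$, together with $\sigma(b)/b-1\le\epsilon$ and $\sigma(n)/n-1\ge\eta$, one gets $0\le\log\frac{\sigma(n)/n-1}{\sigma(a)/a-1}=O(\epsilon/\eta)$ for $\epsilon\le\eta/2$. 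This also shows $a>1$ on the good set; assign the value $0$ to the terms with $a=1$ when you reinsert the exceptional $n$.
\item Davenport's theorem is never actually used, since your inclusion $\{n:\sigma(n)/n-1<\eta\}\subseteq\{n:P^-(n)>1/\eta\}$ suffices.
\item The endpoint adjustment in the digit criterion is needed when $D+1$, not $D$, is a power of $g$.
\item The $n$ with $s(n)<D$ (the primes and finitely many composites) are spuriously counted by the fractional-part criterion. They have logarithmic density $0$, so this is harmless.
\end{itemize}
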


As we show at the end of Section \ref{sec:Benford}, $s(n)$ does \emph{not} satisfy Benford's law with respect to natural density.

In Theorem \ref{thm:firstlastk}, we observe that $s(n)$ usually contains all possible digits. Both Theorems \ref{thm:firstlastk} and \ref{th:Benford} suggest that the digits of $s(n)$ exhibit a high degree of randomness. We now turn our attention to the values of $s(n)$ whose base-$g$ expansions are unusually restrictive, namely those missing one or more digits. Understanding how frequently such values occur leads us to study the preimages of sparse sets under $s$.

One surprising fact about $s(n)$ is that it can map sets with asymptotic density zero to sets with positive asymptotic density. The preimages of $s(n)$ can also display surprising behavior. For example, it is possible for a set $\mathcal{A}$ to have positive asymptotic density while $s^{-1}(\mathcal{A})$ has asymptotic density $0$. Erd\H{o}s even showed that there are sets $\mathcal{A}$ with positive asymptotic density for which $s^{-1}(\mathcal{A})$ is empty!

Some behavior of $s(n)$ is still not well-understood. There is a conjecture of Erd\H{o}s, Granville, Pomerance, and Spiro \cite{egps} from 1992 which can be stated as follows:

\begin{conjecture} Let $\mathcal{A}$ be a set of integers with asymptotic density zero. Then $s^{-1}(\mathcal{A})$ also has asymptotic density zero.    
\end{conjecture}

We shall refer to this conjecture as the ``EGPS Conjecture'' throughout this paper. The conjecture remains open, although it has been proven in some special cases. For example, if $\mathcal{A}$ is the set of prime numbers \cite{Pollack2014}, or the set of palindromes in any given base \cite{PollackPalindromes}, or the set of sums of two squares \cite{TroupeSumofsquares}, it has been shown that the EGPS Conjecture holds. In all of these cases, the elements in the set have very specific structural features. It has also been shown in \cite{PollackPomeranceThompson} that, without imposing any special structure on the elements of the set, the EGPS Conjecture holds for sets $\mathcal{A}$ with size $O(x^{1/2 + \varepsilon})$ where $\varepsilon$ is a fixed function tending to $0$ as $x \rightarrow \infty$. 

In a previous work, Benl\.{i}, Cesana, Dartyge, Dombrowsky, and Thompson  confirmed that the EGPS conjecture holds for sets of integers with missing digits. In particular, they showed:

\begin{theorem} \cite[Theorem 1.8]{bcddt}\label{short theorem in intro}\label{thm: bcddt} 
Fix $g\geq 2$, $\gamma\in(0,1)$, and a nonempty set $\mathcal{D}\subsetneq \{0,1,\dots, g-1\}$. For all sufficiently large $x$, the number of $n\leq x$ for which $s(n)$ has all of its digits in base $g$ restricted to digits in $\mathcal{D}$ is $O(x\exp(-(\log\log x)^\gamma)).$ 
\end{theorem}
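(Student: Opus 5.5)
Since $\mathcal{A}=\mathcal{A}_{g,\mathcal{D}}$ has at most $x^{\log|\mathcal{D}|/\log g}=x^{1-\eta}$ elements up to $x$ (with $\eta>0$), the plan is to show directly that few $n\le x$ have $s(n)\in\mathcal{A}$. Fix $\gamma<1$. First discard exceptional $n$:
\begin{itemize}
\item those whose largest prime factor satisfies $P(n)\le x^{1/\log\log x}$, which number $o(x\exp(-(\log\log x)^\gamma))$ by de Bruijn's bound $x\,u^{-u(1+o(1))}$ with $u=\log\log x$;
\item those with $P(n)^2\mid n$.
\end{itemize}
The remaining $n$ can be written as $n=mp$ with $p=P(n)\nmid m$ and $p>\max(P(m), x^{1/\log\log x})$. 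Then
\[
s(n)=p\,s(m)+\sigma(m).
\]

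Fix $m\le x^{1-1/\log\log x}$. As $p$ runs over primes in $(P(m),x/m]$, the values $s(n)$ run along the linear progression $a p+b$, with $a=s(m)$ and $b=\sigma(m)$. The prime case $m=1$, $a=1$, is exactly the primes-in-missing-digit-sets problem, which is why the total count cannot be better than about $x/\log x$. One should therefore bound the count by sieve rather than by equidistribution.
\begin{itemize}
\item Count $p\le y=x/m$ with $ap+b\in\mathcal{A}$ by discarding primality.
\item Use the standard digit-restricted Fourier machinery (Erd\H{o}s–Mauduit–Sárközy, Maynard style): the exponential sum $\sum_{t\in\mathcal{A},\,t\le Y}e(t\theta)$ has $L^1$-norm savings and a pointwise product structure.
\item This gives, for integers $p\le y$, that the proportion with $ap+b\in\mathcal{A}$ is about $|\mathcal{A}\cap[1,ay]|/(ay)\ll (ay)^{-\eta}$, up to a factor $a^{\epsilon}$, uniformly for $a\le y^{c}$.
\end{itemize}

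In fact the cheaper route suffices, because the target is only $\exp(-(\log\log x)^\gamma)$. For fixed $m$, the number of integers $p\le y$ with $ap+b\in\mathcal{A}$ is at most the number of elements of $\mathcal{A}$ up to $ay+b$ lying in the residue class $b\bmod a$. This is trivially at most $\min(y,\,|\mathcal{A}\cap[1,2ay]|)$, which is useless when $a$ is large. So I would instead use the last digits, as in Theorem~\ref{thm:firstlastk}: write $k=\lfloor\epsilon\log y/\log g\rfloor$. The last $k$ digits of $ap+b$ are determined by $p\bmod g^k$. For $\gcd(a,g)=1$, the map $p\mapsto ap+b \bmod g^k$ is a bijection, so the number of $p\le y$ hitting the $|\mathcal{D}|^k$ allowed residues is
\[
\ll y\,(|\mathcal{D}|/g)^k=y^{1-\eta\epsilon}.
\]
When $\gcd(a,g)=d>1$, I would use the $g$-adic valuation. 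Replace $g^k$ by $g^{k+v}$ with $v=v_g$-type adjustment; the image is then a coset of size $g^{k}/d'$, where $d'$ is bounded by $\gcd(a,g^k)$. Restricted digits still cost $(|\mathcal{D}|/g)^{k-O(\log d'/\log g)}$. Summing over $m$ then gives
\[
\sum_m \frac{x}{m}\,y^{-\eta\epsilon}\gcd(s(m),g^\infty)^{O(1)}.
\]
I expect the main obstacle to be controlling $\gcd(s(m),g^{k})$ on average, i.e.\ showing $s(m)$ is rarely divisible by a large power of a prime dividing $g$. This requires a bound such as $\#\{m\le X: q^j\mid s(m)\}\ll X\,q^{-j/2}+X/\log\log X$, together with the exceptional-set tolerance.

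With $y\ge x^{1/\log\log x}$, the saving $y^{-\eta\epsilon}=\exp(-\eta\epsilon\log x/\log\log x)$ is far smaller than $\exp(-(\log\log x)^\gamma)$, and $\sum 1/m\ll\log x$ is harmless. The final bound is therefore dominated by the discarded sets, which are $O(x\exp(-(\log\log x)^\gamma))$. The delicate bookkeeping is ensuring that the set of $m$ with $\gcd(s(m),g^k)$ large contributes $O(x\exp(-(\log\log x)^{\gamma}))$. For this I would use the multiplicativity of $\sigma$: typically $m$ has a prime $r\equiv -1\bmod q^{j}$, which forces $q^j\mid\sigma(m)$ but not $s(m)$. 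This ought to force the use of an exponent $\gamma<1$.
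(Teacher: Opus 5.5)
\textbf{Verdict: there is a genuine gap.} The step you flag as the main obstacle, controlling $\gcd(s(m),g^k)$, is left open, and the tools you propose for it do not work at your scale.

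\textbf{What is sound.} Your reduction to $n=mp$ with $p=P^+(n)\nmid m$ and $s(n)=p\,s(m)+\sigma(m)$ is fine. So is counting, for fixed $m$, the $p\le x/m$ for which the last $k$ digits of $s(m)p+\sigma(m)$ lie in $\mathcal{D}$. This is the route the paper takes for composite $n$ in the proof of Theorem~\ref{theorem-general}. It is not the argument the paper points to for this theorem.

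\textbf{Where it fails.} For a given $m$ the loss is a factor $\gcd(s(m),g^k)$, not $\gcd(s(m),g^{\infty})^{O(1)}$. Since $g^k\approx (x/m)^{\epsilon}\ge x^{\epsilon/\log\log x}$, you must show that $q^j\mid s(m)$ is rare for prime powers $q^j\mid g^k$ with $q^j\ge x^{c/\log\log x}$, with the bad $m$ weighted by $\pi(x/m)$.
\begin{itemize}
\item An estimate $\#\{m\le X: q^j\mid s(m)\}\ll Xq^{-j/2}+X/\log\log X$ bounds that weighted contribution only by about $x\log\log\log x/\log\log x$. This is far above $x\exp(-(\log\log x)^{\gamma})$ for every $\gamma>0$, and no ``exceptional-set tolerance'' absorbs it.
\item The mechanism you suggest fails in this range. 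The expected number of primes $r\equiv -1\pmod{q^j}$ dividing $m\le x$ is at most about $\log\log x/\varphi(q^j)$, so typical $m$ have none.
\item Even when $q^j\mid\sigma(m)$, you only get $s(m)\equiv -m\pmod{q^j}$, not $q^j\nmid s(m)$.
\item A smaller slip: $s(1)=0$, not $1$. The primes have $s(p)=1$ and contribute at most $\pi(x)$. That is harmless, but it is not a primes-in-digit-sets problem.
\end{itemize}

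\textbf{Two ways to finish.}

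\emph{Keep a large modulus.} Prove the divisibility bound as in Lemma~\ref{Lemma 3-g-old}. Write $m=m_1P_1$ with $P_1=P^+(m)\nmid m_1$. Then $q\mid s(m)$ forces $q_1=\gcd(q,s(m_1))\mid m_1$ and puts $P_1$ in a single class modulo $q/q_1$. Brun--Titchmarsh then gives $\ll\tau(q)X\log X/\varphi(q)$. This needs $P^+(m)$ much larger than $q$. With your fixed $\epsilon$, a positive proportion of $m$ near $x^{1/2}$ have no prime factor exceeding $q^2$. That is why the proof of Theorem~\ref{theorem-general} instead:
\begin{itemize}
\item takes $g^k\approx\exp(\sqrt{\log x})$;
\item restricts to $m>x^{\alpha}$;
\item treats $m\le x^{\alpha}$ using injectivity of $p\mapsto s(mp)$ together with $\#\{a\le X: a\in\mathcal{W}_{\mathcal{D}}\}\ll X^{1-\eta}$.
\end{itemize}
Done this way, composites contribute $\ll x\exp(-c\sqrt{\log x})$, and the statement follows with room to spare. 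Nothing in this route forces $\gamma<1$.

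\emph{Use a small modulus.} Alternatively, use the argument the paper sketches in the proof of Theorem~\ref{thm:lastk} and in Appendix~\ref{appendix}. Take $k\asymp(\log\log x)^{\gamma}$.
\begin{itemize}
\item Each prime $r\equiv -1\pmod{p}$, with $p\mid g$, that divides $n$ to an odd power contributes a factor $p$ to $\sigma(n)$, and typical $n$ have $\asymp\log\log x$ such primes. Hence $g^k\mid\sigma(n)$ for all but $\ll x\,e^{O(k)}(\log x)^{-c}$ integers $n\le x$.
\item For the remaining $n$, $n=\sigma(n)-s(n)\equiv -s(n)\pmod{g^k}$ confines $n$ to $\ll|\mathcal{D}|^k$ residue classes.
\end{itemize}
Balancing $e^{O(k)}$ against $(\log x)^{-c}$ is exactly where $\gamma<1$ enters. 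No decomposition $n=mp$ is needed.
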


The case $g = 2$, omitted from the statement in \cite{bcddt}, also follows by a separate elementary argument. See Appendix \ref{appendix}. 

Observe that $s(p) = 1$ for all primes $p$, which means that whenever the set $\mathcal{D}$ contains $1$, the size of the preimage set of $\mathcal{A}$ has $\pi(x) \sim x/\log x$ as a lower bound. Thus, the upper bound given by Theorem \ref{short theorem in intro} is essentially ``best possible'' because we cannot replace the constant $\gamma \in (0,1)$ with any constant strictly greater than $1$.

In the present paper, we explore what happens to this upper bound when we exclude the contribution from prime inputs. At first glance, one might expect the prime inputs to play only a negligible role in questions concerning the digits of $s(n)$. Surprisingly, this is not the case. When we consider only composite values of $n$, we show that the count is considerably smaller:

    \begin{theorem}\label{theorem-general}
      Let $g\in\N$, $g\ge 2$ and $a_0\in\{ 1,\ldots , g-1\}$.   There exists a constant $c=c(g)>0$, depending on $g$ such that 
    $$\#\left\{n \leqslant x: n \ \mathrm{composite}, s(n) \text { has no } a_0 \text { in its base-$g$ expansion}\right\}\ll x \exp (-c \sqrt{\log x}).$$   
    \end{theorem}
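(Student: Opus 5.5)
Write a composite $n\le x$ as $n=pm$ with $p=P(n)$ the largest prime factor and $m\ge 2$. Then $\sigma(n)=\sigma(p)\sigma(m)=(p+1)\sigma(m)$ when $p\nmid m$, so
$$s(n)=\sigma(n)-n = p\,s(m)+\sigma(m).$$
For fixed $m$ this is a linear polynomial in $p$ with leading coefficient $s(m)\ge 1$. This reduces the problem to counting primes $p$ in a range for which the linear form $s(m)p+\sigma(m)$ misses the digit $a_0$.

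The integers to handle first are those for which this reduction is weak.
\begin{itemize}
\item If $P(n)\le \exp(\sqrt{\log x})$, or $P(n)^2\mid n$, then by standard smooth-number estimates (de Bruijn), together with the trivial bound $\sum_{p>y} x/p^2\ll x/y$, there are $\ll x\exp(-c\sqrt{\log x})$ such $n$. So we may assume $p=P(n)>y:=\exp(\sqrt{\log x})$ and $p\nmid m$, with $m\le x/p$.
\item We should also discard $m$ with $s(m)$ large relative to $m$ in an awkward way. This does not seem to be needed, since $s(m)\le \sigma(m)\ll m\log\log m$.
\end{itemize}

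For each such $m$, the main step is the bound
$$\#\{p\le x/m:\ s(m)p+\sigma(m)\ \text{has no digit } a_0\}\ll \frac{x}{m}\exp(-c\sqrt{\log x}).$$
The number $s(m)p+\sigma(m)$ has about $L\asymp \log x/\log g$ base-$g$ digits. Integers with a missing digit form a set of size $N^{\log(g-1)/\log g}$ below $N$, and this is far smaller than our target. So we only need to control, with a saving of $\exp(-c\sqrt{\log x})$, the number of $p$ in an interval of length $X=x/m\ge y$ hitting this sparse set under the affine map $p\mapsto ap+b$.

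The strategy is to sieve/detect the digit condition only on a block of the last $k$ digits. The condition that the last $k$ digits of $ap+b$ avoid $a_0$ is a condition on $p$ modulo $g^k/\gcd(a,g^k)$, and we do not need the prime condition on $p$ at all. We count all integers $t\le X$ (in place of primes $p$) with $at+b \bmod g^k$ having no digit $a_0$ among its last $k$ digits. Write $d=\gcd(a,g^k)$. For $t$ running over a complete residue system mod $g^k/d$, the value $at+b \bmod g^k$ runs over the class $b \bmod d$ with multiplicity one. So the proportion of good $t$ is
$$\frac{\#\{r\equiv b \bmod d,\ r<g^k,\ \text{no digit } a_0\}}{g^k/d}.$$

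The main obstacle is that $d$ can be large, for instance when $s(m)$ is divisible by a high power of $g$'s primes, which forces the low digits. I would first restrict to $m$ with $\gcd(s(m),g^\infty)\le \exp(\tfrac12\sqrt{\log x})$. For these $m$, taking $g^k\approx \exp(\sqrt{\log x})\le X$, the missing-digit count among the top $k-O(\sqrt{\log x}/\log g)$ digits gives a saving of $((g-1)/g)^{k'}=\exp(-c\sqrt{\log x})$, with error $O(g^k)$ from incomplete periods, which is fine since $X\ge y$.

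The remaining $m$ have $g^{j}\mid s(m)$ for some $g$-prime power exceeding $\exp(\tfrac12\sqrt{\log x})$. Their number should be bounded using the equidistribution of $s(m)$ (equivalently $\sigma(m)-m$) modulo prime powers $q$ of primes dividing $g$. This is where I expect the real work to lie. The usual input is that $\sigma(m)\equiv 0 \bmod q$ for almost all $m$ once $q$ is small, which breaks naive equidistribution. Instead I would use the structure $m=p'm'$ again, with a second large prime $p'$, so that $s(m)=p's(m')+\sigma(m')$ is linear in $p'$. A residue condition modulo $q$ then cuts the count of $p'$ by a factor $1/q$ unless $\gcd(s(m'),q)$ is large, and this can be iterated. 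Summing $x/(mp)$-type bounds over $m$ and $p>y$ gives the claimed $x\exp(-c\sqrt{\log x})$.
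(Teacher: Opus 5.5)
Your reduction matches the paper's. You discard $n$ with $P^+(n)\le y$ or $P^+(n)^2\mid n$, use $s(n)=Ps(m)+\sigma(m)$, and, when $d=\gcd(s(m),g^k)$ is small, put $P$ in residue classes modulo $g^k/d$ without using primality.

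One small correction there: the threshold $\exp(\tfrac12\sqrt{\log x})$ is too large. The loss $d$ has to be beaten by $((g-1)/g)^k\approx y^{-\log(g/(g-1))/\log g}$, so you need $d\le y^{\theta}$ with $\theta<\log(g/(g-1))/\log g$. For example, with $g=6$ and $d=2^e$ near your threshold, the least $j$ with $d\mid 6^j$ already has $6^j>g^k$, so your number $k'$ of free top digits is negative. This is fixed by shrinking the constant.

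\textbf{First gap: large $\gcd(s(m),g^k)$.} This is the heart of the matter, and you only sketch it. Your iteration (``unless $\gcd(s(m'),q)$ is large, recurse on $m'$'') needs, at every level, a cofactor whose largest prime factor is large compared with $q$. That fails with positive probability once the cofactor has size a bounded power of $q$, so it is not clear the recursion ever produces a saving.

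The idea you are missing makes iteration unnecessary. Suppose $q\mid P_1s(m_1)+\sigma(m_1)$ and put $q_1=\gcd(s(m_1),q)$. Then $q_1\mid\sigma(m_1)$, hence $q_1\mid\sigma(m_1)-s(m_1)=m_1$. So $P_1$ lies in one class modulo $q_2=q/q_1$ (Brun--Titchmarsh gives a factor $1/\varphi(q_2)$), and $m_1$ is a multiple of $q_1$ (a factor $1/q_1$). Summing over factorizations $q=q_1q_2$ gives Lemma~\ref{Lemma 3-g-old} in one step: $\#\{x^{\alpha}<m\le X:\ q\mid s(m)\}\ll \frac{\tau(q)}{\varphi(q)}X\log X$.

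\textbf{Second gap: small $m$.} Both that lemma and your second-large-prime idea need $m$ large, say $m>x^{\alpha}$, so that $P^+(m)>y^2$ outside a negligible set of smooth $m$. But integers $n=Pm$ with $m\le x^{\alpha}$ make up a positive proportion of all $n\le x$. For those small $m$ with $\gcd(s(m),g^k)$ large you have no argument. You would need $s(m)$ to be equidistributed modulo $q$ for $m$ as small as a bounded power of $q$, and bounding the $m\le y^{A}$ trivially gives only $O(x/\sqrt{\log x})$.

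The paper handles $m\le x^{\alpha_g}$, with $\alpha_g=1/(2g^2\log^2 g)$, by a device that ignores $\gcd(s(m),g^k)$ entirely. Since $s(m)\ge 1$, the map $P\mapsto Ps(m)+\sigma(m)$ is injective. Hence the number of admissible $P$ is at most the number of integers up to $x^{1+1/(g\log g)}$ lacking the digit $a_0$, which is $\ll x^{1-1/(g^2\log^2 g)}$. Summing over $m\le x^{\alpha_g}$ gives $E_1\ll x^{1-\alpha_g}$. Your plan needs some such treatment of small $m$.
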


\subsection{Notation}\label{Notation} We record some notation that will be used throughout the paper. For $n\in\mathbb{N}$, $n\geq 2$, let $P^{+}(n)$ denote the largest prime factor of $n$ and $P^{-}(n)$ denote the smallest prime factor of $n$.

    Fix $g \in \N$. Let $ \mathcal{D} \subsetneq \{ 0,..,g-1\}$. Then the set of integers with missing digits is given by:
     \begin{align*}\label{def:WD}
 \mathcal{W}_{\mathcal{D}}\coloneqq \left\{ n\in\mathbb{N} :  n= \sum_{j=0}^J\varepsilon_j(n) g^j, J \geq 0,  \varepsilon_j(n)\in\mathcal{D}, \varepsilon_J \neq 0\right\}. 
 \end{align*}
We denote by $\mathcal{W}_{\mathcal{D}}(g^k)$ the subset of  $\mathcal{W}_{\mathcal{D}}$ of integers up to $g^k$:
\begin{align*}
    \mathcal{W}_{\mathcal{D}}(g^k):= \{ n \in \mathcal{W}_{\mathcal{D}}: n \leq g^k\}.
\end{align*}

In Section \ref{sec:Benford}, we will state Benford's law in terms of logarithmic density, which we define as follows:

\begin{definition} For $A \subseteq \mathbb{N}$, we define the logarithmic density of $A$ as follows:

$$\delta_{\log}(A) = \lim_{x \rightarrow \infty} \frac{1}{\log x} \sum_{\substack{n \leq x \\ n \in A}} \frac{1}{n},$$ provided that the limit exists.
    
\end{definition}

\section{Digital hits}

The goal of this section is to prove Theorem \ref{thm:firstlastk}, which has been separated into two parts. The first part is stated below as Theorem \ref{thm:lastk}, and the second part is stated as Theorem \ref{thm:firstk}.

We will require the following lemma in our first proof:

\begin{lemma}\cite[Lemma 2.1]{Pollack2014}\label{lemma:oldpaper}
		Let $x\geq 3$. Let $q$ be a positive integer. Then 
		$$\sum_{\substack{{n\le x}\\ {q \nmid \sigma (n)}}}1\ll \frac{x}{(\log x)^{1/\varphi(q)}},$$
		uniformly in $q$ where $\varphi$ is Euler's totient function.
        
	\end{lemma}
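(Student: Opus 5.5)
The idea is to exhibit many primes $p$ with $q\mid p+1$ and show that almost every $n$ is divisible by one of them exactly to the first power. If $p\,\|\,n$ with $p\equiv -1 \pmod q$, then $\sigma(n)=\sigma(p)\,\sigma(n/p)=(p+1)\,\sigma(n/p)$ is divisible by $q$. Let $\mathcal P$ be the set of primes $p\equiv -1\pmod q$. Then every $n\le x$ with $q\nmid\sigma(n)$ has the following property: each prime of $\mathcal P$ dividing $n$ divides it at least to the second power. So it suffices to bound the number of such $n$.

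\emph{Reduction to small $q$.} If $\varphi(q)\ge \log\log x$, then $(\log x)^{1/\varphi(q)}\le e$, and the trivial bound $x$ already gives the claim. Hence we may assume $\varphi(q)<\log\log x$, so in particular $q\ll (\log\log x)^{2}$. Uniformity in $q$ then only has to be checked in a range where Siegel--Walfisz applies.

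\emph{Factorisation.} Write $n=ab$, where $a$ is the largest divisor of $n$ supported on primes of $\mathcal P$. By the reduction above, $a$ is squarefull, and $b$ has no prime factor in $\mathcal P$.
\begin{itemize}
\item For squarefull $a>x^{1/2}$, use the trivial count $x/a$ for $b$. The sum of $1/a$ over squarefull $a>x^{1/2}$ is $\ll x^{-1/4}$, so these terms contribute $\ll x^{3/4}$.
\item For $a\le x^{1/2}$, we have $y:=x/a\ge x^{1/2}$. A standard upper-bound sieve (Brun's sieve, or Shiu/Hall--Tenenbaum for multiplicative functions) gives
$$\#\{b\le y:\ p\nmid b\ \forall p\in\mathcal P\}\ll y\prod_{\substack{p\le y\\ p\in\mathcal P}}\Bigl(1-\frac1p\Bigr)\ll y\exp\Bigl(-\sum_{\substack{p\le x^{1/2}\\ p\equiv -1 \ (q)}}\frac1p\Bigr).$$
The implied constant is absolute.
\item Summing $1/a$ over all squarefull $a$ converges, so this range contributes $x\exp\bigl(-\sum_{p\le x^{1/2},\,p\equiv-1\,(q)}1/p\bigr)$ up to a constant.
\end{itemize}

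\emph{Main step: a uniform lower bound for the prime sum.} We need
$$\sum_{\substack{p\le x^{1/2}\\ p\equiv -1\ (q)}}\frac1p\ \ge\ \frac{\log\log x}{\varphi(q)}-O(1),$$
uniformly for $q\ll(\log\log x)^2$; this is the step I expect to be the main obstacle.
\begin{itemize}
\item Restrict to primes in $(z,x^{1/2}]$ with $z=\exp(q)$. For $t\ge z$ we have $q\le \log t$, so Siegel--Walfisz (with $A=1$) gives $\pi(t;q,-1)=\mathrm{li}(t)/\varphi(q)+O(t\exp(-c\sqrt{\log t}))$.
\item Partial summation then yields
$$\sum_{z<p\le x^{1/2},\ p\equiv -1\ (q)}\frac1p=\frac{\log\log x-\log q}{\varphi(q)}+O(1).$$
\item Since $\log q/\varphi(q)$ is bounded (because $\varphi(q)\gg q/\log\log q$), the loss from $\log q$ is only $O(1)$ in the exponent.
\end{itemize}

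Combining, the count is $\ll x(\log x)^{-1/\varphi(q)}+x^{3/4}$. The term $x^{3/4}$ is dominated, since $(\log x)^{1/\varphi(q)}\le\log x$. This proves the lemma uniformly in $q$.
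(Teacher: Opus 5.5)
The paper gives no proof of this lemma; it simply quotes it from \cite{Pollack2014}, so there is nothing in the paper to compare against. Your argument is correct and follows the standard route to this bound. If $q\nmid\sigma(n)$, then every prime $p\equiv -1\,(\mathrm{mod}\ q)$ dividing $n$ divides it at least twice. So $n$ is a squarefull $\mathcal P$-part times a $\mathcal P$-free part. An upper-bound sieve then reduces everything to a lower bound for $\sum_{p\le x^{1/2},\ p\equiv -1\,(\mathrm{mod}\ q)}1/p$; the Halberstam--Richert inequality (or Shiu) gives the absolute implied constant you need. Your reduction to $\varphi(q)<\log\log x$ handles uniformity in $q$.

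The one place you differ from the usual write-up is that you prove the Mertens-type lower bound yourself, via Siegel--Walfisz on $p>e^{q}$ plus the observation $\log q/\varphi(q)=O(1)$. The standard references instead quote Pomerance's uniform estimate $\sum_{p\le x,\ p\equiv a\,(\mathrm{mod}\ q)}1/p\ge \frac{\log\log x}{\varphi(q)}-O\bigl(\frac{\log 2q}{\varphi(q)}\bigr)$. Your version is self-contained at the cost of an ineffective implied constant, which does not matter for the lemma as stated.
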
 

Now we are able to prove Theorem \ref{thm:lastk}. 

	\begin{theorem}\label{thm:lastk}
 If $k(x) \to \infty$ as $x\to \infty$, then asymptotically $100\%$ of integers $n\leq x$ are such that $s(n)$ has all $g$ digits in base  $g$ appearing among its last (i.e., the rightmost) $k(x)$ digits. 

	\end{theorem}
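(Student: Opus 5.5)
We will prove the statement by working modulo a power of $g$, writing $s(n)=\sigma(n)-n$. Without loss of generality $k=k(x)\le \log\log\log x$ (say): if all digits appear among the last $k'$ digits for some $k'\le k$, they also appear among the last $k$. Set $q=g^{k}$. The last $k$ base-$g$ digits of $s(n)$ are determined by the residue of $s(n)$ modulo $q$, provided $s(n)\ge q$. This holds for all but $O(q)$ values of $n\le x$, since $s(n)\ge \sqrt n$ for composite $n$, and the primes are negligible. We therefore need to show that the residue $s(n)\bmod q$, written with exactly $k$ digits (leading zeros allowed), contains all $g$ digits for almost all $n\le x$.

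The key input is Lemma \ref{lemma:oldpaper}. It shows that, apart from $O\big(x/(\log x)^{1/\varphi(q)}\big)$ exceptions, $q\mid \sigma(n)$. Since $\varphi(q)\le g^{k}\le (\log\log x)^{\log g/\log 3}$ or so, which is $o(\log\log x)$, the exceptional count is
\[
x\exp\Big(-\frac{\log\log x}{\varphi(q)}\Big)=o(x).
\]
For the remaining $n$ we have $s(n)\equiv -n \pmod q$. Hence the last $k$ digits of $s(n)$ are those of $q-(n\bmod q)$ when $q\nmid n$. So it suffices to count $n\le x$ whose residue $r=n\bmod q$ makes the $k$-digit string of $q-r$ miss some digit.

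The map $r\mapsto q-r$ is a bijection on $\{1,\dots,q-1\}$, and $n\bmod q$ is equidistributed up to an error of $O(q)$. The count is therefore at most
\[
x\Big(\frac{\#\{0\le m<q:\ m \text{ misses some digit in its $k$-digit expansion}\}}{q}\Big)+O(q)+O(x/q)+o(x).
\]
The number of $k$-digit strings missing a given digit is $(g-1)^k$, so the bracketed ratio is at most $g(1-1/g)^k\to0$ as $k\to\infty$. Because $q\le \log x$, the $O(q)$ term is $o(x)$. This gives the theorem.

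The main obstacle is balancing the two competing requirements on $k$. It must tend to infinity so that random $k$-digit strings contain every digit, but it must grow slowly enough that $\varphi(g^k)=o(\log\log x)$ for Lemma \ref{lemma:oldpaper} to save anything. Replacing $k(x)$ by $\min(k(x),\lfloor \tfrac12\log_g\log\log x\rfloor)$ handles this, since the property is monotone in $k$.
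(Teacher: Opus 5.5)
Your argument is correct and is essentially the paper's proof: you truncate $k$ so that $\varphi(g^k)=o(\log\log x)$, apply Lemma~\ref{lemma:oldpaper} with $q=g^k$ to get $g^k\mid\sigma(n)$ for almost all $n$, and then use $s(n)\equiv -n \bmod{g^k}$ to reduce to counting $n$ in the at most $g(g-1)^k$ bad residue classes (the paper phrases this via $B\in\mathcal{W}_{\mathcal{D}}(g^k-1)$ and sends $g=2$ to the appendix, whereas your count covers $g=2$ directly). Note only that it is your final truncation $k\le\frac12\log_g\log\log x$ that makes this work --- the initial $k\le\log\log\log x$ gives only $g^k\le(\log\log x)^{\log g}$, which is not $o(\log\log x)$ once $g\ge 3$ --- and that composite $n$ with $s(n)<q$ number $O(q^2)$ rather than $O(q)$, which is equally harmless.
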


\begin{proof}

We first note that the case when $g=2$ is handled in Appendix \ref{appendix}, as the statement of the theorem boils down to showing that the number of $n\leq x$ for which all digits of $s(n)$ are 1 is $o(x)$ in this case.  Let $g\geq 3$. Let $x$ be large, assume $k=k(x)\to \infty$ as $x\to \infty$. Let $\gamma<1$. Note that $k(x)\le \frac{\log x}{\log g}$. Without loss of generality, we can assume $k\leq (\log \log\log x)^\gamma$; otherwise, we can simply replace $k(x)$ with $\min\{k(x),(\log\log\log x)^\gamma\}$. Then we would like to show that 
    for any subset 
    $\mathcal{D}\subsetneq\{ 0,\ldots, g-1\}$, 
	$$\#A:=\#\{n\leq x: s(n)\equiv B\bmod {g}^k, \, \text{for some $B\in \mathcal{W}_{\mathcal{D}}(g^{k}-1)$}\}=o(x).$$

    Following the proof of Theorem 1.8 in \cite{bcddt}, for $n\in A$, if  $g^k\mid \sigma(n)$, we have 
	\begin{align*} 
		n= \sigma(n)-s(n) \equiv -s(n) \equiv -B\bmod{g^k}
	\end{align*}
	so that 
	\begin{align*}
	 \sum_{\substack{n\in A \\ \sigma(n)\equiv 0\bmod{g^k}}} 1  \leq \sum_{\substack{n\leq x\\ n\equiv -B\bmod{g^k} \\ B\in \mathcal{W}_{ \mathcal{D}}(g^{k}-1)}} 1 &= \sum_{B\in \mathcal{W}_{ \mathcal{D}}(g^{k}-1)} \sum_{\substack{n\leq x\\ n\equiv -B\bmod{g^k}}} 1 \\ &\leq |\mathcal{D}|^k \left(\frac{x}{g^k}\right)+|\mathcal{D}|^k \\ &\ll x  \exp \left( -k \log \left(g/|\mathcal{D}|\right)\right) +|\mathcal{D}|^k.
	\end{align*}
	The $|\mathcal{D}|^k$ does not pose a problem since $|\mathcal{D}|^k \leq (g-1)^k \leq \exp(\log(g-1) (\log \log \log x)^\gamma) = (\log \log x)^{o(1)} = o(x)$. Consequently, the last expression in the displayed equation is $o(x)$ if $k(x)\to \infty$ as $x\to \infty$ and since $k\ll (\log\log\log x)^\gamma$. 
    
    To complete the proof, we need to show that the number of $n\leq x$  with $g^k \nmid \sigma (n)$ is $o(x).$ To accomplish this, we appeal to Lemma \ref{lemma:oldpaper}, taking $q = g^k$. Since  $k\leq (\log\log\log x)^\gamma$, we then have  the number of $n\leq x$  with $g^k \nmid \sigma (n)$ is $\ll x\exp\left(-\frac{\log\log x}{\varphi(g^k)}\right)\leq  x\exp\left(-\frac{\log\log x}{g^k}\right)\leq x\exp\left(-\frac{\log\log x}{\exp((\log g) (\log \log \log x)^\gamma)}\right)=o(x).$

\end{proof}

Again fix $g\ge 2$. We now present the proof that the \textit{first} $k(x)$ base $g$ digits of $s(n)$ include all $g$ digits $100\%$ of the time. We first collect some auxiliary results. 

Let $M = \prod_{p \leq z} p^{z}$, where $z = g^{10k}$.	By the Prime Number Theorem, $M \leq 3^{z^2}$. Let $n'\in (0,M]$ be the least positive representative of $n\bmod{M}$ and $n_{z}$ the $z$-smooth (or $z$-friable) part of $n'$, that is $n_z=\prod_{\substack{p^e\mid\mid n'\\p\leq z}}p^e$. We have the following.	
\begin{lemma}\label{lem: n to nz}
	Let $x$ be large and $k\in \mathbb{N}$ with $k(x)\to\infty$ as $x\to \infty$. Put $z = g^{10k}$.
	For all but $o(x)$ values of $n \leq x$, one has 
	$0 \leq s(n)/n - s(n_z)/n_z < z^{-1/2}$
	and $s(n_z)/n_z \geq 1/k.$
	\end{lemma}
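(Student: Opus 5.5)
The plan is to work with the multiplicative function $h(n)=\sigma(n)/n = 1 + s(n)/n$, so that $s(n)/n - s(n_z)/n_z = h(n)-h(n_z)$. First I discard exceptional $n$ so that $n_z$ equals the $z$-smooth part of $n$ itself. This holds as soon as no prime $p\le z$ has $p^{z}\mid n$. Indeed, then $n \bmod M$ has the same $p$-adic valuation as $n$ for every $p\le z$, because $v_p(n)<z=v_p(M)$. The number of $n\le x$ with $p^z\mid n$ for some $p\le z$ is at most $x\sum_{p\le z}p^{-z}\le x\,z\,2^{-z}=o(x)$. For the remaining $n$, write $n=n_z m$ with $P^-(m)>z$, so that $h(n)=h(n_z)h(m)$ with $h(m)\ge 1$. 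This immediately gives $h(n)-h(n_z)=h(n_z)(h(m)-1)\ge 0$, which is the lower bound.

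For the upper bound, I note that $h(m)-1\le \exp\bigl(\sum_{p^e\| m}\log(1+1/(p-1))\bigr)-1$, which is $\ll \sum_{p\mid m}1/(p-1)$ whenever that sum is small. I also need $h(n_z)$ bounded, and on average it is: $\sum_{n\le x}h(n)\ll x$. So it suffices to show that for all but $o(x)$ values of $n$, both $h(n)\le k$ and $\sum_{p\mid n,\,p>z}1/p<z^{-1/2}/(2k)$. The first holds off $o(x)$ values by Markov's inequality. For the second, the average is
\[
\frac1x\sum_{n\le x}\sum_{p\mid n,\,p>z}\frac1p\le\sum_{p>z}\frac1{p^2}\ll \frac1{z\log z}.
\]
By Markov, the number of $n$ with this sum exceeding $z^{-1/2}/(2k)$ is $\ll x\,k z^{-1/2}/\log z=o(x)$, since $z=g^{10k}$ grows exponentially in $k$. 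Combining the two bounds, $h(n)-h(n_z)\le h(n)(h(m)-1)\ll k\cdot z^{-1/2}/(2k)$. With a slightly smaller threshold, say $z^{-1/2}/(Ck)$ with a suitable constant $C$, this is below $z^{-1/2}$.

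For the second claim, $s(n_z)/n_z\ge 1/k$, I observe that it holds whenever $n_z$ has a prime factor $p\le k$, because then $h(n_z)-1\ge 1/p\ge 1/k$. The claim can only fail if $n$ has no prime factor $\le k$. A sieve bound (or the elementary bound $x\prod_{p\le k}(1-1/p)+2^{\pi(k)}$, with $k$ truncated as in the proof of Theorem \ref{thm:lastk} so that $2^{\pi(k)}=o(x)$) shows these are $\ll x/\log k=o(x)$ in number. The case $n_z=1$ is contained in this one. The only somewhat delicate step is the upper bound on $h(m)-1$ uniformly, including prime powers dividing $m$. For that I use $\log h(m)\le\sum_{p\mid m}\log\frac{p}{p-1}\le 2\sum_{p\mid m}1/p$, together with $e^t-1\le 2t$ for small $t$.
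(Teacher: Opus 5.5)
Your proof is correct, and its skeleton matches the paper's: discard $n$ with $p^z\mid n$ for some $p\le z$, apply a first-moment (Markov) bound to the difference, and use a sieve bound for $n$ with no prime factor $\le k$. Where you differ is in how you bound $s(n)/n-s(n_z)/n_z$.

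The paper uses the additive identity
\[
\frac{s(n)}{n}-\frac{s(n_z)}{n_z}=\sum_{\substack{d\mid n\\ P^+(d)>z}}\frac1d\le\sum_{\substack{d\mid n\\ d>z}}\frac1d .
\]
This gives nonnegativity at once and has mean $\ll 1/z$ over $n\le x$. So one Markov step at threshold $z^{-1/2}$ leaves $\ll xz^{-1/2}$ exceptions, using only $z\to\infty$.

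Your multiplicative factorization $h(n)-h(n_z)=h(n_z)(h(m)-1)$ instead needs two separate controls, $h(n_z)\le h(n)\le k$ and $h(m)-1\ll\sum_{p\mid n,\,p>z}1/p$, each with its own Markov step. The factor $k$ you lose is absorbed only because $z=g^{10k}$ is exponential in $k$. Both arguments work, but the paper's is shorter. Your first Markov step is in fact unnecessary: deterministically $h(n_z)\le\prod_{p\le z}\frac{p}{p-1}\ll\log z\ll k$ by Mertens.

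One small point of phrasing. If you use Legendre's bound for the rough numbers, what you may truncate is the sieve parameter, not $k$ itself, because $z$ is tied to $k$. An $n$ with no prime factor $\le k$ also has none $\le\min\{k,\log x\}$, and that is enough. The standard sieve bound $\ll x/\log k$, which the paper uses, needs no truncation at all.
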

	\begin{proof}
First note that	if $n_z$ is divisible by $p^{z}$ for some prime $p \leq z$, then so is $n'$ and since $n\equiv n'\bmod M$, so is $n$. Thus, the number of $n\leq x$ such that $n_z$ is divisible by $p^{z}$ for some prime $p \leq z$ is at most $x\sum_{p\leq z} 1/p^{z}\ll x/2^z=o(x)$.

Thus, we can assume $n_z$ is not  divisible by $p^{z}$ for any prime $p \leq z$. It follows that $n_z$, the $z$-smooth part of $n'$, is also the $z$-smooth part of $n$. 
So, for such $n$ and $n_z$, we have 
	$$0 \leq \frac{s(n)}{n} - \frac{s(n_z)}{n_z} = \sum_{\substack{d | n\\ P^+(d) >z}} \frac{1}{d} \leq \sum_{\substack{d\mid n\\ d > z}} \frac{1}{d} .$$
	Averaging over positive integers $l\leq x$ allows us to obtain 
    $$\sum_{l\leq x}\sum_{\substack{d | l\\ d > z}} \frac{1}{d} \ll \frac{x}{z}.$$ 
    Applying  Markov's inequality yields
	\[\#\left\{n\leq x: \frac{s(n)}{n} - \frac{s(n_z)}{n_z}\geq z^{-1/2}\right\}\ll z^{1/2}x/z=o(x).\]

	Now, if $ p\mid n_z$, then $\frac{s(n_z)}{n_z} = \sum_{\substack{d | n_z\\ d> 1}} 1/d \geq  1/p$. So,  $\frac{s(n_z)}{n_z} \geq  1/k$ unless $n_z$ is $k$-rough. But then $n$ is also $k$-rough, and the number of such $n\leq x$ is $\ll x/\log k=o(x)$.  
	\end{proof}

\begin{remark} We briefly remark on certain exceptional cases that we can safely ignore in our proof of Theorem \ref{thm:firstk} below, since they contribute only $o(x)$ to the total count. As always, we assume that $k(x)\to \infty$ as $x\to \infty$. Then 
\begin{itemize}
    \item If $n$ has $r$ decimal digits, we can assume that $g^{r} > x/g^{\log \log k}$.  Otherwise, if $g^r \leq x/g^{\log \log k}$, then $n\leq g^{r} \leq x/g^{\log \log k}.$ The number of these $n$ is $o(x)$. 
    
    \item If $s(n)$ has $m$ digits base $g$ then we can assume that $|m-r| < \log \log k$. This will follow from Lemma \ref{lem:m-r}, which we prove below.

\end{itemize}

\end{remark}

\begin{lemma}\label{lem:m-r} Let $g\geq 2$, $g\in \mathbb{N}$. The number of $n\leq x$ 
    with the number $m$ of base $g$ digits of $s(n)$, and the number $r$ of base $g$ digits of $n$, satisfies $|m-r| \geq \log \log k$ is $o(x)$.
    
\end{lemma}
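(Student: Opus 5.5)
The plan is to compare the sizes of $s(n)$ and $n$ through the ratio $s(n)/n$. Write $g^{r-1}\le n<g^r$ and $g^{m-1}\le s(n)<g^m$. Then
\[
g^{m-r-1} < \frac{s(n)}{n} < g^{m-r+1}.
\]
So if $|m-r|\ge \log\log k$, one of two things happens. Either $s(n)/n > g^{\log\log k-1}$, which is large, or $s(n)/n< g^{1-\log\log k}$, which is small. Both quantities tend to infinity, respectively zero, because $k=k(x)\to\infty$. It therefore suffices to show that $s(n)/n$ is rarely very large and rarely very small.

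\emph{Large values.} We use $s(n)/n<\sigma(n)/n=\sum_{d\mid n}1/d$. Averaging gives
\[
\sum_{n\le x}\frac{\sigma(n)}{n}\le \sum_{d\le x}\frac{1}{d}\cdot\frac{x}{d}\le \zeta(2)x.
\]
By Markov's inequality, the number of $n\le x$ with $s(n)/n> T$, where $T=g^{\log\log k-1}$, is at most $\zeta(2)x/T=o(x)$.

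\emph{Small values.} If $p\mid n$ with $p<n$, then $s(n)/n\ge 1/p$. Hence $s(n)/n<1/y$, with $y=g^{\log\log k-1}$, forces one of two cases.

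\begin{itemize}
\item $n=1$. This is a single integer and contributes $O(1)$.
\item Every prime factor of $n$ exceeds $y$, so $n$ is $y$-rough. This includes the case $n$ prime with $n>y$; note that $s(n)=1$ and $m=1$ then, so this case genuinely occurs. The number of $y$-rough $n\le x$ is $\ll x/\log y + y$ by a standard sieve bound (as used at the end of Lemma \ref{lem: n to nz}). This is $o(x)$ because $y\to\infty$ and, by the reduction in the Remark, we may take $k$ slowly growing.
\end{itemize}

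Summing the two exceptional sets gives $o(x)$. The only care needed is in the bookkeeping of the $\pm1$ in the exponents relating digit counts to the ratio, and in making sure the thresholds still tend to infinity. I expect no real obstacle; the small-ratio case is the one that needs the sieve input.
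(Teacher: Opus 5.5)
Your proof is correct, but it controls the two tails of $s(n)/n$ differently from the paper. Both arguments start from the same reduction: $|m-r|\ge\log\log k$ forces $s(n)/n>g^{\log\log k-1}$ or $s(n)/n<g^{1-\log\log k}$.

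The paper then appeals to Davenport's theorem. The distribution function $D(u)$ of $s(n)/n$ exists, is continuous, tends to $0$ as $u\to0$ and to $1$ as $u\to\infty$. Comparing with fixed thresholds $M$ and $1/M$ and letting $M\to\infty$ gives $o(x)$.

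You replace this black box by two elementary estimates:
\begin{itemize}
\item for the upper tail, the first-moment bound $\sum_{n\le x}\sigma(n)/n\le\zeta(2)x$ with Markov's inequality;
\item for the lower tail, the fact that $s(n)\ge n/p$ for every prime $p\mid n$, so a small ratio forces $n$ to be $y$-rough, combined with the sieve bound $\ll x/\log y$.
\end{itemize}
These are the same tools already used in the proof of Lemma~\ref{lem: n to nz}, so your version is self-contained. It is also quantitative: the exceptional set has size $\ll x(\log k)^{-\log g}+x/\log\log k\ll x/\log\log k$. The paper's argument is shorter, but as written it gives no rate.

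One bookkeeping remark: the Remark does not contain a reduction to slowly growing $k$, but you do not need one. Either observe that the exceptional set only shrinks as $k$ increases, so replacing $k$ by $\min\{k,\log\log\log x\}$ is harmless. Or simply drop the $+y$ term: for $y>x$ there are no $y$-rough $n$ with $1<n\le x$, so the count is $\ll x/\log y$ in all cases.
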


    \begin{proof}
If $n\leq x$ has $r$ digits, then $g^{r-1}\leq n< g^r$. And if $s(n)$ has $m$ digits then $g^{m-1}\
 \leq s(n)< g^{m}$. So, $g^{m-r-1}< \frac{s(n)}{n}< g^{m-r+1}$. If $|m-r| \ge \log\log k$,  either $m-r \ge \log\log{k}$ or $r-m \ge \log\log{k}$. In the case $m-r\ge \log\log{k}$, we have $\frac{s(n)}{n}>\frac{1}{g}(\log k)^{\log g}$. On the other hand, if $r-m\ge \log\log{k}$, then $\frac{s(n)}{n} < \frac{g}{(\log{k})^{\log{g}}}$.  
 
Davenport \cite{davenport33} has shown that for each fixed $u\ge 0$, the density of $n$ with $s(n)\le un$ exists. Moreover, calling this function $D(u)$, we have that $D(u)$ is continuous, that $D(u) \to 0$ as $u\to  0$, and that $D(u)\to 1$ as $u\to\infty$. For any fixed $M>0$, we have that $(\log{k})^{\log{g}}/g$ eventually exceeds $M$ while $g/(\log{k})^{\log{g}}$ is eventually smaller than $1/M$. Hence,
\begin{multline*} \limsup_{x\to\infty}\frac1x\left(\#\left\{n\leq  x: \frac{s(n)}{n}>\frac{1}{g}(\log k)^{\log g}\right\}+\# \left\{n\leq x: \frac{s(n)}{n}<\frac{g}{(\log k)^{\log g}}\right\}\right)\\
\leq (1-D(M))+D(1/M).
\end{multline*}
The lemma follows upon sending $M$ to infinity.\end{proof}

	We are now ready to prove the second half of Theorem \ref{thm:firstlastk}. 
	
	\begin{theorem}\label{thm:firstk}
	Assume $k(x)\to \infty$ as $x\to \infty$. 	
	Then the number of $n\leq x$ where, in the base $g$ expansion of $s(n)$, not all digits are represented among the first $k$ digits is $o(x)$. In other words, asymptotically $100\%$ of integers $n\leq x$ are such that $s(n)$ has all $g$ digits in base $g$ appearing among its first (i.e., leftmost) $k(x)$ digits.
	\end{theorem}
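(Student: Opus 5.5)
The idea is to show that, for almost all $n$, the leading digits of $s(n)$ are those of $n\alpha$, where $\alpha := s(n_z)/n_z$ depends only on the residue of $n$ modulo $M$. Then, inside each residue class, we count the $n$ for which $n\alpha$ has a leading block missing a digit. There are few such blocks, and each one confines $n$ to a short interval.

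\textbf{Reductions.} If the first $k'$ digits of $s(n)$ already contain all $g$ digits, so do the first $k \ge k'$ digits. Hence we may replace $k(x)$ by $\min\{k(x), k_0(x)\}$, where $k_0(x) \to \infty$ very slowly. It suffices that $M \le 3^{z^2}$ with $z = g^{10k}$ satisfies $M g^{2k} = o(x)$; for example take $k_0 \le \frac{1}{100}\log_g\log\log x$. We discard the $o(x)$ exceptional $n$ from Lemma \ref{lem: n to nz} and from the Remark together with Lemma \ref{lem:m-r}. By the proof of Lemma \ref{lem: n to nz}, for every remaining $n$ the number $n_z$ is the $z$-smooth part of $n$ and is determined by $n \bmod M$. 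So on a residue class $a \bmod M$ the quantity $\alpha = \alpha_a \ge 1/k$ is fixed, and
\[
s(n) = n\alpha(1+\theta), \qquad 0 \le \theta \le \frac{z^{-1/2}}{\alpha} \le k g^{-5k}.
\]

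\textbf{Counting in a residue class.} Let $m$ be the number of digits of $s(n)$, and suppose the first $k$ digits of $s(n)$ form a block $D \in [g^{k-1}, g^k)$ missing some digit. Then
\[
n\alpha \in \Big[\frac{D g^{m-k}}{1+\theta},\, (D+1)g^{m-k}\Big).
\]
This forces $n$ into an interval of length at most $g^{m-k}(1 + k g^{-4k})/\alpha$. The number of $n \equiv a \bmod M$ in it is therefore at most $2g^{m-k}/(\alpha M) + 1$. The number of bad blocks $D$ is at most $g(g-1)^k$.

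The admissible values of $m$ are constrained by $g^{m-1} \le s(n) \le 2\alpha x$. Summing the geometric series over $m$ gives $\sum_m g^{m} \ll g\,\alpha x$. The number of values of $m$ is $O(\log\log k + \log k)$, since $\alpha \ge 1/k$ and $r$ is pinned down to within $\log\log k$ by the Remark. Hence the residue class $a$ contributes
\[
\ll g^2\Big(\frac{g-1}{g}\Big)^{k}\frac{x}{M} + (g-1)^k \log k .
\]

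\textbf{Conclusion and main obstacle.} Summing over the $M$ residue classes, the bad $n$ number
\[
\ll g^2\big(1-\tfrac1g\big)^k x + M g^k \log k + o(x) = o(x),
\]
using $k \to \infty$ and the choice of $k_0$. The step I expect to need the most care is the uniformity bookkeeping. The modulus $M$ is doubly exponential in $k$, so the ``$+1$'' boundary terms are only harmless after shrinking $k$. One must also make sure that the multiplicative error $\theta$, together with the possibility that $n\alpha$ and $s(n)$ have different digit lengths, is absorbed by working directly with the interval condition on $n\alpha$ for each $m$ rather than with the digits of $n\alpha$.
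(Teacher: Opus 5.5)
Your proposal is correct and follows essentially the same route as the paper: shrink $k$, discard the exceptional $n$ of Lemma \ref{lem: n to nz} and Lemma \ref{lem:m-r}, freeze $s(n_z)/n_z$ on residue classes modulo $M$, and count the $n$ in each class lying in the short interval attached to each bad leading block. The differences are only in bookkeeping. The paper fixes $r$ and bounds $n_z/s(n_z)\le k$, whereas you let $\alpha$ cancel against $\sum_m g^m\ll g\alpha x$. Your explicit choice $k\le \frac{1}{100}\log_g\log\log x$ also makes it transparent that the boundary terms $Mg^k\log k$ are $o(x)$.
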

	\begin{proof}
    We can assume our function $k=k(x) \to \infty$ grows very slowly, e.g., $k(x) \leq \log\log\log x$ (otherwise, we can simply replace $k(x)$ with $\min\{k(x),\log\log\log x\}$).
     
Let $n\leq x$ be such that in the base $g$ expansion of $s(n)$, not all digits are represented among the first $k$ digits. Let $r\in \mathbb{N}$ be such that \[ g^{r-1}\leq n<g^r. \] By our first remark above, we may assume that \[ g^r>\frac{x}{g^{\log\log k}}.\] Let $m\in \mathbb{N}$ be such that  \[ g^{m-1}\leq s(n)<g^m.\] By our second remark above, we may assume  that 
\[ |m-r| < \log \log k. \] 

Write $s(n) = g^{m-k} L  + T$, where $L, T\in \mathbb{N}$ with $0\leq T< g^{m-k}$. Then $L < g^{k}$, and the digits of $L$ are the $k$ leading digits of $s(n)$. Note that the number of possibilities for $L$ is $\ll g^{ck}$, for a constant $c < 1$ depending only on $g$ (because $L$ is missing at least one base $g$ digit). This puts $s(n)$ in the interval  \begin{align*} I_L&:=[L\cdot g^{m-k},(L+1)\cdot g^{m-k}) \\&:=[t_L,T_L). \end{align*}

We fix $r$, $m$, and $L$, and count the number of corresponding $n$. It is convenient to sort these $n$ into residue classes modulo $M = \prod_{p \leq z} p^{z}$, where $z = g^{10k}$. By the Prime Number Theorem, $M \leq 3^{z^2} =x^{o(1)}$, where the last equality comes from our restriction on the size of $k$. Let  $n'\in (0,M]$ be the least positive representative of $n \bmod M$  and $n_{z}$ be the $z$-smooth part of $n'$. Excluding the $o(x)$ exceptional values of $n$ described in Lemma \ref{lem: n to nz} above, we have $$\frac{s(n)}{n} \leq \frac{s(n_z)}{n_z} \left(1 +\frac{n_z}{s(n_z)}  z^{-1/2}\right) \leq {s(n_z)} (1 + k z^{-1/2}).$$ 
    
We now fix also $n'\in \mathbb{N}\cap (0,M]$ and consider only nonexceptional integers $n \equiv n' \bmod M$ corresponding to our  choices of $r$,$m$, and $L$ from above.

If $s(n) = n \cdot s(n)/n$ is in one of the intervals $I_L$ from above, then $n$ is in $(n/s(n)) I_L$. Notice that $s(n)/n \geq s(n_z)/n_z$, and so
	$n/s(n)\leq  n_z/s(n_z)$. Since $T_L$ is the upper endpoint of $I_L$, the dilated interval $(n/s(n))I_L$ has upper endpoint at most \[ T_{L'} := (n_z/s(n_z)) T_L.\]
	
	Now, we consider the lower endpoint. We have
    
	$$\frac{s(n)}{n} \leq \frac{s(n_z)}{n_z}+ z^{-1/2}
	= \frac{s(n_z)}{n_z}\left(1 + z^{-1/2} \frac{n_z}{s(n_z)}\right)
\leq \frac{s(n_z)}{n_z} (1 + z^{-1/3})$$
since $z=g^{10k}.$ Hence,
 \[ \frac{n}{s(n)} \geq \frac{n_z}{s(n_z)} (1-z^{-1/3}).\] Thus, if $t_L$ is the lower endpoint of $I_L$, then $(n/s(n))I_L$ has lower endpoint at least \[ t_{L'} := n_z/s(n_z) (1-z^{-1/3}) t_L.\] 
 
Every nonexceptional $n\equiv n'\bmod{M}$ corresponding to our  choices of $r$,$m$,  and $L$ belongs to the interval $[t_{L'}, T_{L'})$. The length of this interval is
$$T_{L'}-t_{L'}=\frac{n_z}{s(n_z)} T_L-\frac{n_z}{s(n_z)}(1-z^{-1/3}) t_L=\frac{n_z}{s(n_z)} |I_L| + z^{-1/3} \frac{n_z}{s(n_z)} t_L,$$
and so
\begin{align*}
\#\{n\leq x: n \equiv n' \bmod M,\,\, t_{L'}\leq n< T_{L'}    \}&\leq \frac{1}{M}\frac{n_z}{s(n_z)} \left(|I_L| + z^{-1/3} t_L\right)+1
\\&\leq 1 + \frac{k g^{m-k}(1+z^{-1/3}L)}{M}.
\end{align*}

We bound the total number of nonexceptional $n$ by summing over parameters previously held fixed. First, we sum over the $M$ possible values of $n' \in (0,M]$. 
Recalling that $z=g^{10k}$ and $L\ll g^{ck}$ for $c<1$, the number of $n$ arising, for a fixed choice of $r, m$, and $L$, is at most
    \[ M + 2k \cdot g^{m-k} \le 3k g^{m-k}. \]
For the last inequality, we used that $M=x^{o(1)}$ while (for large $x$)
\[ g^{m-k} = g^{r} g^{m-r} g^{-k} \gg \frac{x}{g^{\log\log{k}}} \cdot \frac{1}{g^{\log\log{k}}} \cdot \frac{1}{g^k} > \frac{x}{g^{2k}} \ge \frac{x}{(\log\log{x})^{O(1)}}.\]

Next, we sum on the possibilities for $L$; this gives an upper bound of the form $3k \cdot g^{-c' k} g^m$, for a positive $c'>0$, bounded away from $0$. Furthermore, $m \leq r + \log \log k$. Summing $3k \cdot g^{-c' k} g^m$ on these $m$, we obtain a quantity bounded above by $g^r g^{-c'' k}$, for another small $c'' > 0$. Summing on the possibilities for $r$ gives an upper bound of $x g^{-c''' k}$ for $c'''>0$. If we combine this upper bound with the previously mentioned $o(x)$ contributions from the exceptions, we still obtain $o(x)$ for our final count.
		
	\end{proof}

\section{Benford's law}\label{sec:Benford}

It was shown by Diaconis \cite[Theorem 1]{diaconis77} that a sequence $\{a_n\}$ of positive integers obeys Benford's law, in base $10$, precisely when $\{\log_{10} a_n\}$ is uniformly distributed modulo $1$. Diaconis's result is about decimal expansions and natural density, but the argument works just as well in any base $g$ and for logarithmic density.

\begin{proposition} A sequence $a_n > 0$ is base $g$ Benford in the sense of logarithmic density if and only if $\{a_n\}$ is uniformly distributed mod $1$, in the sense of logarithmic density. That is, for every interval $I \subseteq [0, 1)$ we have $\delta_{\log}(\{n : \{\log_{g} a_n\} \in I\}) = |I|.$    
\end{proposition}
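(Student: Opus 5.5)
The plan is to follow Diaconis's argument, with logarithmic density in place of natural density and base $g$ in place of base $10$. The statement should be read with $\{\log_g a_n\}$ (not $\{a_n\}$) uniformly distributed mod $1$, as the displayed condition makes clear. The first step is a dictionary between leading digits and fractional parts. For a positive integer $D$ with $j$ digits in base $g$, the leftmost digits of $a_n$ are the digits of $D$ exactly when $D g^{e} \le a_n < (D+1) g^{e}$ for some integer $e$. After taking $\log_g$ and subtracting the integer $e+j-1$, this is equivalent to
\[
\{\log_g a_n\} \in I_D := \bigl[\log_g D - (j-1),\ \log_g(D+1) - (j-1)\bigr) \subseteq [0,1),
\]
and $|I_D| = \log_g(1+1/D)$. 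At the right endpoint, $D+1=g^j$ gives $\log_g(D+1)-(j-1)=1$, which is still fine. Hence the set of $n$ whose leading block is $D$ equals $\{n:\{\log_g a_n\}\in I_D\}$.

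\emph{Uniform distribution implies Benford.} This direction is now immediate: apply the hypothesis to the interval $I=I_D$ to get $\delta_{\log} = \log_g(1+1/D)$.

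\emph{Benford implies uniform distribution.} Here the hypothesis only gives the correct logarithmic density for the intervals $I_D$. I would proceed as follows.

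\begin{enumerate}
\item Observe that for each fixed $j$, the $D$ with exactly $j$ digits give intervals $[\log_g D - (j-1), \log_g(D+1)-(j-1))$ partitioning $[0,1)$. Their endpoints are $\{\log_g D\}$, and the mesh is at most $\log_g(1+g^{1-j})\to 0$.
\item Note that intervals $[0,b)$ with $b$ an endpoint have $\delta_{\log}$ equal to their length, since such an interval is a finite disjoint union of the $I_D$ and $\delta_{\log}$ is finitely additive.
\item Take an arbitrary interval $[a,b)\subseteq[0,1)$. Choose endpoints $a^-\le a\le a^+$ and $b^-\le b\le b^+$ from level $j$, each within the mesh $\varepsilon_j$. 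Sandwich the counting sums: the $\limsup$ of the normalized sum $\frac{1}{\log x}\sum_{n\le x,\ \{\log_g a_n\}\in[a,b)} 1/n$ is at most $b^+-a^-\le b-a+2\varepsilon_j$, and the $\liminf$ is at least $b-a-2\varepsilon_j$.
\item Let $j\to\infty$, so that the limit exists and equals $b-a$. Closed and open intervals are handled the same way by the sandwich.
\end{enumerate}

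The only genuine obstacle is this approximation step. One must check that the monotone sandwich argument uses nothing specific to natural density. It does not, since only positivity and finite additivity of the weighted counts $\sum 1/n$ are needed. One must also handle the density of the endpoint set $\{\log_g D\}$ mod $1$, which follows from the mesh bound $\log_g(1+g^{1-j})$.
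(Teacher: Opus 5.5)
Your proposal is correct and follows the paper's route. The paper gives no separate proof: it cites Diaconis's Theorem 1 and asserts that his argument carries over to base $g$ and logarithmic density. Your dictionary $\{\log_g a_n\}\in I_D$, together with the sandwich over level-$j$ endpoints (which uses only positivity and finite additivity of the weighted counts $\sum 1/n$), is exactly that argument written out, and you are right to read the statement as being about $\{\log_g a_n\}$ rather than $\{a_n\}$.
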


 The following criterion (see, for example, \cite[p. 209]{gonggulupollack}) will allow us to decide uniform distribution.
    
	\begin{theorem}[Weyl's criterion for logarithmic densities]\label{thm:Weylscrit}
 The sequence $\{a_n\}$ is uniformly distributed modulo $1$ with respect to logarithmic density 
if and only if 
$$\lim_{N\to \infty}\frac{1}{\log N}\sum_{n\leq N}\frac{e^{2\pi i \ell a_n}}{n}=0$$ for all integers $\ell\neq  0$.
	\end{theorem}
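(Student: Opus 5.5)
We would follow the classical proof of Weyl's criterion, replacing the Cesàro averages $\frac1N\sum_{n\le N}$ by the logarithmic averages
\[
\mathbb{E}_N(f) := \frac{1}{\log N}\sum_{n\le N}\frac{f(a_n)}{n},
\]
where $f$ is a $1$-periodic function. The only facts about these averages that we use are the following. They are linear and positive, and $\mathbb{E}_N(1)=\frac{1}{\log N}\sum_{n\le N}\frac1n\to 1$. Also, $\mathbb{E}_N(f)$ depends only on the values of $f$ on the fractional parts $\{a_n\}$. Uniform distribution with respect to logarithmic density means that $\mathbb{E}_N(\mathbf 1_I)\to |I|$ for every interval $I\subseteq[0,1)$, where $\mathbf 1_I$ is extended periodically.

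\emph{Necessity.} Assume uniform distribution. By linearity, $\mathbb{E}_N(h)\to\int_0^1 h$ for every step function $h$ on $[0,1)$. Given a continuous $1$-periodic $f$ (real or imaginary part of $e^{2\pi i \ell x}$) and $\varepsilon>0$, choose step functions $h_1\le f\le h_2$ with $\int_0^1 (h_2-h_1)<\varepsilon$. By positivity,
\[
\int_0^1 h_1 \le \liminf_{N\to\infty} \mathbb{E}_N(f)\le \limsup_{N\to\infty}\mathbb{E}_N(f)\le \int_0^1 h_2 .
\]
Letting $\varepsilon\to0$ gives $\mathbb{E}_N(f)\to\int_0^1 f$. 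For $f(x)=e^{2\pi i\ell x}$ with $\ell\neq0$ the integral vanishes, which is the stated condition.

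\emph{Sufficiency.} Assume the exponential condition for every $\ell\ne0$. By linearity and $\mathbb{E}_N(1)\to1$, we get $\mathbb{E}_N(P)\to\int_0^1 P$ for every trigonometric polynomial $P$. The Weierstrass approximation theorem for trigonometric polynomials (for instance via Fejér sums) then extends this to every continuous $1$-periodic $f$. Indeed, if $\|f-P\|_\infty<\varepsilon$, then $|\mathbb{E}_N(f)-\mathbb{E}_N(P)|\le\varepsilon\,\mathbb{E}_N(1)\le 2\varepsilon$ for large $N$. Finally, for an interval $I\subseteq[0,1)$ and $\varepsilon>0$, choose continuous periodic functions $g_1\le \mathbf 1_I\le g_2$ with $\int_0^1(g_2-g_1)<\varepsilon$; piecewise linear trapezoids around $I$ work. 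The same sandwich argument as in the necessity step gives $\mathbb{E}_N(\mathbf 1_I)\to|I|$, so the limit defining $\delta_{\log}$ exists and equals $|I|$.

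\emph{Main obstacle.} There is no real difficulty; the one point requiring care is the sandwiching step. Positivity of the weights $1/n$ is what lets us pass from smooth test functions to indicator functions of intervals. The normalization $\mathbb{E}_N(1)\to1$, which uses $\sum_{n\le N}1/n\sim\log N$, ensures that the uniform approximation errors stay bounded by a constant times $\varepsilon$. Once these two properties are noted, the argument is identical to the natural-density case.
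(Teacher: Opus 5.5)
Your proof is correct: it is the classical proof of Weyl's criterion with Ces\`aro means replaced by logarithmic means, and you rightly note that it uses only the positivity of the weights $1/n$ and the normalization $\frac{1}{\log N}\sum_{n\le N}\frac{1}{n}\to 1$. The paper gives no proof of this statement and simply cites it from the literature, so your argument is essentially the standard justification behind that citation.
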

	
We define $$\{a_k\}_{k\geq 2}:=\{\log_{g} s(k)\}_{k\geq 2}.$$
	By Theorem \ref{thm:Weylscrit}, in order for $s(n)$ to obey Benford's law, it is enough if $e^{2\pi i \ell \log s(n)/\log g} $
has logarithmic mean $0$ (over integers $n\geq 2$) for every fixed integer $\ell\neq 0$. 

That certainly holds if $s(n)^{i\alpha}$ has logarithmic mean  $0$ (over integers $n\geq 2$) for each fixed nonzero real number $\alpha$. We  prove this by appealing to the following weighted version of Hal\'{a}sz's Theorem:

	\begin{proposition}[Proposition 1.2.6, \cite{GranvilleSoundararajan}]\label{Prop GS}
    If $f$ is a multiplicative function with $|f(n)| \leq 1$ for all $n\in\mathbb{N}$, then
\[\frac{1}{\log x}\left|\sum_{n\leq x}\frac{f(n)}{n}\right|\leq\exp\left(-\frac12\sum_{\substack{q\leq x\\q\text{ prime}}}\frac{1-\Re\left(f(q)\right)}{q}\right).\]
	\end{proposition}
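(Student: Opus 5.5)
The plan is to compare the truncated logarithmic sum with the Dirichlet series $F(s)=\sum_{n\ge1} f(n)n^{-s}$ on the line $\Re s=\sigma:=1+1/\log x$. There the Euler product gives a clean multiplicative bound. The square root implicit in the factor $\tfrac12$ in the exponent should come from a Cauchy--Schwarz step in the passage from $F(\sigma)$ to the partial sum.

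\textbf{Step 1 (the Euler product bound).} For $\Re s=\sigma>1$ and $|f|\le 1$, write
\[
\log|F(s)| = \Re\sum_{p}\frac{f(p)}{p^{s}} + O(1),
\]
with the $O(1)$ coming from prime powers. This needs care, since $f$ is only multiplicative rather than completely multiplicative; I would bound the local factors directly and use $|f(p^j)|\le1$. Then split $\Re f(p)=1-(1-\Re f(p))$ and use Mertens' theorem in the form $\sum_{p\le x}1/p^{\sigma}=\log\log x+O(1)$, $\sum_{p>x}1/p^\sigma=O(1)$. This gives
\[
|F(\sigma)|\ll \log x\,\exp\Bigl(-\sum_{p\le x}\frac{1-\Re f(p)}{p}\Bigr).
\]
More generally, the analogous bound holds for $|F(\sigma+it)|$ with $f(p)$ replaced by $f(p)p^{-it}$.

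\textbf{Step 2 (from the series to the partial sum).} I would write $S(x)\log x$ as $\sum_{n\le x}\frac{f(n)}{n}\log(x/n)+\sum_{n\le x}\frac{f(n)\log n}{n}$. The first sum is a Perron-type smoothed sum, $\frac{1}{2\pi i}\int F(1+s)x^{s}s^{-2}\,ds$ taken on $\Re s=1/\log x$. The second sum is handled by $\log n=\sum_{d\mid n}\Lambda(d)$, giving a convolution of $f$ with $f\Lambda$. After Cauchy--Schwarz and Plancherel, this reduces to $\int |F(\sigma+it)|^{2}|t+i/\log x|^{-2}\,dt$ against the trivially bounded $\int|F'/F|^2$-type term. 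Inserting Step 1 into one factor and the trivial bound $\sum_{n\le x}1/n\le \log x+1$ into the other yields
\[
\frac{1}{\log x}|S(x)|\le\Bigl(\frac{1}{\log x}\sup_t|F(\sigma+it)|\Bigr)^{1/2}\cdot O(1),
\]
which is the source of the exponent $\tfrac12$.

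\textbf{Step 3 and the main obstacle.} The main obstacle is the supremum over $t$: Step 1 controls $|F(\sigma+it)|$ by $\sum_p (1-\Re f(p)p^{-it})/p$, not by the $t=0$ quantity. For logarithmic averages one can localize to $|t|\ll 1$ because of the weight $|s|^{-2}$. For $|t|\le1$, the difference between $f(p)p^{-it}$ and $f(p)$ for $p\le e^{1/|t|}$ is small, and for larger $p$ one uses the decay of $x^{s}/s^2$. Finally, the absolute constants would have to be tracked so that the stated inequality holds with no implied constant. I expect that this bookkeeping, rather than any new idea, is where the argument of Granville--Soundararajan does the real work.
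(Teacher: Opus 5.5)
The paper does not prove this proposition; it is imported from Granville--Soundararajan, so your sketch has to stand on its own.

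\textbf{The statement as transcribed.} With $\le$ and no constant, the inequality is false. For $f\equiv 1$ the left side is $\frac{1}{\log x}\sum_{n\le x}\frac1n>1$, while the right side equals $1$. It can only hold with an absolute implied constant, which is all the proof of Theorem~\ref{thm:slogmean} uses. So your aim of tracking constants until ``no implied constant'' remains cannot succeed and should be dropped.

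\textbf{Two smaller repairs.} First, for merely multiplicative $f$ you only get the upper bound $\log|F(\sigma+it)|\le \Re\sum_p f(p)p^{-\sigma-it}+O(1)$. A local factor can be tiny: $f(2^k)=-1$ for all $k\ge1$ kills the factor at $2$ when $s=1$. The upper bound is all you need. Second, $f(n)\log n$ is not $f*(f\Lambda)$ unless $f$ is completely multiplicative, and for general $f$ the coefficients $\Lambda_f$ need not be $O(\Lambda)$. Use $f(n)\log n=\sum_{p^k\Vert n} f(p^k)\log(p^k)\, f(n/p^k)$ instead. This gives $\sum_{n\le x}\frac{f(n)\log n}{n}=\sum_{p\le x}\frac{f(p)\log p}{p}S(x/p)+O(\log x)$, with $S(y)=\sum_{m\le y}f(m)/m$.

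\textbf{The genuine gap is Step 3.} All the content lies there, and the plan you sketch for it does not work.
\begin{itemize}
\item Your intermediate bound in terms of $\sup_t|F(\sigma+it)|$ is vacuous. For $f(n)=n^{i/2}$ one has $|F(\sigma+\tfrac{i}{2})|=\zeta(\sigma)\asymp\log x$, so the bound gives $O(1)$. The right-hand side, however, is $\asymp(\log x)^{-1/2}$. Since that peak sits at $t=\tfrac12$, restricting to $|t|\ll 1$ does not help.
\item The relevant scale is $|t|\asymp 1/\log x$. Comparing $f(p)p^{-it}$ with $f(p)$ is harmless only for $p\le e^{1/|t|}$. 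The primes in $(e^{1/|t|},x]$ cost a factor as large as $(1+|t|\log x)^2$, and the decay of $x^s/s^2$ is in $t$, not in $p$.
\end{itemize}

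\textbf{What honest bookkeeping gives.} Put $E:=\exp(-\sum_{p\le x}(1-\Re f(p))/p)$; then $|F(\sigma+it)|\ll(\log x)\min\{1,E(1+|t|\log x)^2\}$. Balancing at $|t|\log x\asymp E^{-1/2}$ gives $\sum_{n\le x}\frac{f(n)}{n}\log\frac{x}{n}\ll(\log x)^2E^{1/2}$. This balancing, not Cauchy--Schwarz, is where the $\tfrac12$ comes from. Elementarily, the inequality $|1+f(p)|\le 2-\tfrac12(1-\Re f(p))$ bounds $\sum_{n\le x}|(1*f)(n)|/n$ by $(\log x)^2E^{1/2}$.

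The same input only gives $\int_{\mathbb{R}}|F(\sigma+it)|^2|\sigma-1+it|^{-2}\,dt\ll(\log x)^3E^{1/2}$. Your Cauchy--Schwarz step then bounds the $\log n$-term only by $(\log x)^2E^{1/4}$. Carried out, your plan proves the exponent $\tfrac14$, not $\tfrac12$. Any positive exponent would suffice for the paper's application to $b_p$.

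\textbf{How to reach $\tfrac12$.} You need two further inputs:
\begin{itemize}
\item Handle large $|t|$ with the mean value theorem $\int_{-T}^{T}|F(\sigma+it)|^2\,dt\ll T+\log x$.
\item For $|t|\le 1$, use the prime number theorem to replace the constant $2$ in $\sum_{p\le x}|1-p^{-it}|/p\le 2\log(1+|t|\log x)+O(1)$ by $4/\pi$; any constant $\le\tfrac32$ works.
\end{itemize}
Using $E\gg(\log x)^{-2}$, the Plancherel integral is then $\ll(\log x)^3E$, and your Cauchy--Schwarz gives exactly $E^{1/2}$.
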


	Proposition \ref{Prop GS} tells us that, if $f$ is a multiplicative function of modulus at most $1$, then $f$ has logarithmic mean value $0$ unless ``$f$ pretends to be 1.'' This will come in handy when we prove our main result below. 
    
    We will need the following definition:

    \begin{definition}\label{def:specialprimedivisor} We say that $p^e$ is the \textit{special prime divisor} of $n$ if it is the prime power $p^e$ for which  $p$ is the least prime divisor of $n$ and $p^e\mid\mid n$. We denote the special prime divisor of $n$ by $\gamma(n)$. \end{definition}

    Now that we have all of the ingredients, we are ready to prove that $s(n)^{i\alpha}$ has logarithmic mean $0$.

\begin{theorem}\label{thm:slogmean} For any fixed nonzero $\alpha\in\R$, we have $$\lim_{N \rightarrow \infty} \frac{1}{\log N} \sum_{1<n \leq N} \frac{s(n)^{i\alpha}}{n} = 0.$$
In particular,   for any integer base $g\geq 2$, $s(n)$ obeys Benford's law in terms of logarithmic density in base $g$.
\end{theorem}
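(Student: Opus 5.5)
The plan is to split $s(n)^{i\alpha}=n^{i\alpha}\,(s(n)/n)^{i\alpha}$ and handle the two factors separately. The factor $n^{i\alpha}$ oscillates, and Proposition \ref{Prop GS} turns that oscillation into cancellation. The factor $(s(n)/n)^{i\alpha}$ is almost entirely determined by the small prime factors of $n$.

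\textbf{Reduction to the smooth part.} Fix a large parameter $z$, which will be sent to infinity only at the very end, after $N\to\infty$. Write $n=am$, where $a=n_z$ is the (true) $z$-smooth part of $n$ and $m$ is $z$-rough. We have $s(n)/n=\sigma(n)/n-1\ge s(a)/a$. Exactly as in the proof of Lemma \ref{lem: n to nz}, $0\le s(n)/n-s(a)/a\le\sum_{d\mid n,\,d>z}1/d$. Its logarithmic average over $n\le N$ is $\ll z^{-1}\log N$, since
\[\sum_{n\le N}\frac1n\sum_{d\mid n,\ d>z}\frac1d\le \sum_{d>z}\frac1{d^2}\sum_{t\le N/d}\frac1t\ll \frac{\log N}{z}.\]
Next, $s(a)/a\ge 1/P^-(n)$ whenever $a>1$. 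So outside the set of $n$ with no prime factor $\le \log z$, whose logarithmic density is $\ll 1/\log\log z$, we have $s(a)/a\ge 1/\log z$. Using $|u^{i\alpha}-v^{i\alpha}|\le|\alpha|\,|\log u-\log v|\le |\alpha|(u-v)/v$ for $u\ge v>0$, the replacement of $(s(n)/n)^{i\alpha}$ by $(s(a)/a)^{i\alpha}$ costs at most
\[\frac{1}{\log N}\sum_{1<n\le N}\frac1n\min\Bigl(2,\ |\alpha|\log z\sum_{d\mid n,\,d>z}\frac1d\Bigr)+O\Bigl(\frac1{\log\log z}\Bigr)\ll_\alpha \frac{\log z}{z}+\frac1{\log\log z}.\]
This is $o_{z\to\infty}(1)$ uniformly in $N$.

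\textbf{Main term via Halász.} It remains to show that
\[\frac1{\log N}\sum_{a\ z\text{-smooth}}\frac{(s(a)/a)^{i\alpha}a^{i\alpha}}{a}\sum_{\substack{m\le N/a\\ m\ z\text{-rough}}}\frac{m^{i\alpha}}{m}\longrightarrow 0 \qquad (N\to\infty),\]
where the term $n=1$ (i.e.\ $a=m=1$) is omitted, which is harmless. Take $f(m)=m^{i\alpha}\mathbf 1_{P^-(m)>z}$. This is multiplicative with $|f|\le1$, and for primes $q>z$ we have $1-\Re f(q)=1-\cos(\alpha\log q)$. By the prime number theorem (partial summation), $\sum_{q\le y}\cos(\alpha\log q)/q=O_\alpha(1)$ for $\alpha\ne0$. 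Hence Proposition \ref{Prop GS} gives
\[\Bigl|\sum_{m\le y}\frac{f(m)}{m}\Bigr|\ll_\alpha \log y\,(\log y)^{-1/2}\]
for $y\ge3$. For $a\le \sqrt N$ this inner sum is therefore $O_\alpha((\log N)^{1/2})$. Summing against $\sum_{a\ z\text{-smooth}}1/a=\prod_{p\le z}(1-1/p)^{-1}\ll\log z$ gives $o(\log N)$ for fixed $z$. For $a>\sqrt N$, bound the inner sum trivially by $\log N+1$. The remaining factor $\sum_{a>\sqrt N,\ z\text{-smooth}}1/a$ is the tail of a convergent series, so it tends to $0$ as $N\to\infty$ with $z$ fixed. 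Letting $N\to\infty$ and then $z\to\infty$ proves the limit. The Benford statement then follows from Theorem \ref{thm:Weylscrit} and the preceding Proposition, taking $\alpha=2\pi\ell/\log g$.

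\textbf{Main obstacle.} The delicate step is the approximation step: the error must be controlled uniformly in $N$. This is why I use the trivial bound $2$ on the set where $s(a)/a$ is small, and pay for that set only through its logarithmic density. The order of limits must also be kept straight: first $N\to\infty$, then $z\to\infty$. The Halász step itself is routine once one observes that $\sum_q(1-\cos(\alpha\log q))/q$ diverges like $\log\log y$.
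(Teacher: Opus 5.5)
Your argument is correct, and it takes a genuinely different route from the paper.

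\textbf{The paper's route.} The paper writes $s(n)^{i\alpha}=\sigma(n)^{i\alpha}(1-u(n))^{i\alpha}$ with $u(n)=n/\sigma(n)$ and expands by the binomial series. It restricts to $n$ whose special prime divisor satisfies $\gamma(n)\le B$, so that $u(n)\le\rho<1$ and the series can be truncated at a level $K(B,\alpha)$. It then applies Hal\'asz to the whole family $b_p(m)=\mathbf 1_{P^-(m)>p}\,\sigma(m)^{i\alpha}u(m)^k$, for $p^e\le B$ and $k\le K$. Finally it discards the $n$ with $\gamma(n)>B$ by a logarithmic-density count.

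\textbf{Your route.} You instead factor out $n^{i\alpha}$ and replace the non-multiplicative part $(s(n)/n)^{i\alpha}$ by a function of the $z$-smooth part. This is essentially a logarithmically weighted version of Lemma \ref{lem: n to nz}. Your lower bound $s(n_z)/n_z\ge 1/\log z$, valid off the set $P^-(n)>\log z$, plays the role that $u(n)\le\rho$ plays in the paper. Both arguments must avoid $n$ with $s(n)/n$ near $0$, and both do so by restricting the least prime factor.

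\textbf{Comparison.} Your version needs no series expansion or truncation, and only one application of Proposition \ref{Prop GS}, to the very simple function $m^{i\alpha}\mathbf 1_{P^-(m)>z}$. It also makes visible that all the cancellation comes from $n^{i\alpha}$. This is exactly the $n^{i\alpha}$-pretentiousness mentioned in the remark after the proof, and it shows why the method is tied to logarithmic rather than natural density. The paper's version stays entirely within multiplicative functions and needs no approximation step.

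The arithmetic input is the same in both: the partial sums of $\sum_q \cos(\alpha\log q)/q$ are bounded. For this, partial summation needs the prime number theorem with an error term saving a power of $\log x$; alternatively, use the link to $\log\zeta(1+i\alpha)$ that the paper cites. The bare asymptotic $\pi(x)\sim x/\log x$ is not enough.

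\textbf{A small slip to fix.} $(s(a)/a)^{i\alpha}$ is undefined not only for $n=1$ but for every $n$ with $a=1$, i.e.\ every $n$ with $P^-(n)>z$. Define the main term with the $a=1$ terms set to $0$. These $n$ already lie in the set $P^-(n)>\log z$, where you use the trivial bound, so nothing else changes.
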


\begin{proof} We cannot directly apply Proposition \ref{Prop GS} to estimate the mean value of $s(n)^{i\alpha}$, since $s(n)$ is not multiplicative. To work around this, write $s(n) = \sigma(n) - n = \sigma(n) (1-n/\sigma(n))$, and let \[ u(n):= n/\sigma(n).\] Since $0 < u(n)<1$ for all $n>1$,  Newton's binomial theorem gives us that
\begin{align*} s(n)^{i \alpha} &= \sigma(n)^{i\alpha}(1 - u(n))^{i\alpha} \\ &= \sum_{k \geq 0} \binom{i\alpha}{k} \sigma(n)^{i\alpha} (-u(n))^k.\end{align*}

If the sum on $k$ were finite, it would suffice by linearity to show that each term $\sigma(n)^{i\alpha} u(n)^k$ has logarithmic mean value $0$. That mean value result will be shown below using Proposition \ref{Prop GS}. However, since this is an infinite sum, we must be more careful. We truncate the binomial series at a finite level, far enough out to make the error small. This would be straightforward if we had $u(n) \leq a$ for some fixed $a< 1$. However, $u(n)$ can be arbitrarily close to $1$, and this necessitates partitioning our set of integers $n$ according to their special prime divisors (as in Definition \ref{def:specialprimedivisor}).

Each $n\in\mathbb{N}$ with special prime divisor $\gamma (n)=p^e$ has the form $n = p^em$, where the least prime dividing $m$ is greater than $p$. For each of these $n$, we have $\sigma(n)/n\geq \sigma(p^e)/p^e$, and so $u(n) = n/\sigma(n) \leq p^e/\sigma(p^e) < 1$. Since $u(n)$ is bounded away from $1$, when we look at the series expansion
	
$$	s(n)^{i\alpha} = \sum_{k \geq 0} \binom{i\alpha}{k}\sigma(n)^{i\alpha} (-u(n))^{k},$$ 
we will know where to cut off the righthand side for an acceptable error. 
 
Let $B$ be a fixed real number, which we will later choose to be very large.
We split the sum 
\[
T(N):=\frac{1}{\log N}
\sum_{1< n\le N}
\frac{(s(n))^{i\alpha}}{n}
\]
in two sub-sums $T(N)=T_1 (N)+T_2 (N)$ where 
\[T_1 (N)
:= \frac{1}{\log N}\sum_{\substack{{1<n\leq N}\\
{\gamma (n)\le B}}}\frac{1}{n}\sum_{k \geq 0} \binom{i\alpha}{k}\sigma(n)^{i\alpha} (-u(n))^{k}\]

and 
\[T_2 (N)
:= \frac{1}{\log N}\sum_{\substack{{1<n\leq N}\\
{\gamma (n)> B}}}\frac{1}{n}\sum_{k \geq 0} \binom{i\alpha}{k}\sigma(n)^{i\alpha} (-u(n))^{k}.\]

\subsection{Bounding the sum $T_1 (N)$.}

 We define
$$\rho := \max_{p^e\le B}
\frac{p^e}{\sigma(p^e)}.$$
If $n$ has a special prime power divisor $p^e\le B$, then $$u(n) = \frac{n}{\sigma(n)} \leq \rho < 1.$$ Hence, the binomial series $$(1 - u(n))^{i\alpha} = \sum_{k \geq 0} \binom{i\alpha}{k}(-u(n))^k$$ converges uniformly on the set of $n$ with a special prime divisor $\le B$. We fix $K = K(B, \alpha)$ sufficiently large so that $$\sup_{0\leq u \leq \rho}\left |(1 - u)^{i\alpha} - \sum_{k=0}^K \binom{i\alpha}{k}(-u)^k\right| < \frac{1}{B}.$$ Since $|\sigma(n)^{i\alpha}| =1$, we then have uniformly for $n$ with a special prime divisor $\le B$, that $$\left|s(n)^{i\alpha} - \sum_{k = 0}^K \binom{i\alpha}{k}(-1)^k\sigma(n)^{i\alpha}u(n)^k\right| < \frac{1}{B}.$$ 

So, we rewrite $T_1(N)$ as 
\begin{align*}
 T_1 (N)&= \frac{1}{\log N}\sum_{p^e\leq B}\sum_{\substack{1<n\leq N\\
 \gamma (n)=p^e }}\frac{1}{n}\sum_{0\leq k \leq K} \binom{i\alpha}{k}\sigma(n)^{i\alpha} (-u(n))^{k} \\&+ \frac{1}{\log N}\sum_{\substack{1<n\leq N\\
 \gamma (n)\le B }}\frac{1}{n}\sum_{k > K} \binom{i\alpha}{k}\sigma(n)^{i\alpha} (-u(n))^{k}
    \end{align*}
and by our choice of $K$,
\begin{align*}
   \left|\frac{1}{\log{N}}\sum_{\substack{1<n\leq N\\
   \gamma (n)\leqslant B}}\frac{1}{n}\sum_{k > K} \binom{i\alpha}{k}\sigma(n)^{i\alpha} (-u(n))^{k}\right|\leq& \frac{1}{\log{N}} \sum_{\substack{1<n\leq N\\
   \gamma (n)\leqslant B}}\frac{1}{n}\left|\sum_{k > K} \sigma(n)^{i\alpha}\binom{i\alpha}{k}(-u(n))^{k}\right|\\\le &\frac{1}{B \log{N}}\sum_{\substack{1<n\leq N\\
   \gamma (n) \leqslant B}} \frac{1}{n} \le \frac{2}{B},
\end{align*}
which will be acceptable to us.

It remains to handle the contribution to $T_1(N)$ from
\begin{equation}\label{eq:tobeohof1}\frac{1}{\log N}\sum_{p^e\leq B}\sum_{\substack{1<n\leq N\\
 \gamma (n)=p^e }}\frac{1}{n}\sum_{0\leq k \leq K} \binom{i\alpha}{k}\sigma(n)^{i\alpha} (-u(n))^{k}.\end{equation}
We change the order of summation in this finite sum and attempt to compute the mean value of one of the terms $\sigma(n)^{i\alpha} (-u(n))^{k}$ over $n$ with $\gamma(n)\leq B$. We have
 
	\begin{align}\label{decompose sum}
\nonumber	
\sum_{\substack{{n\leq N}\\ {\gamma (n)\leqslant B}}}\frac1n\sigma(n)^{i\alpha} (-u(n))^k  =& \sum_{p^e\leq B}\frac{1}{p^e}\sigma(p^e)^{i \alpha} (-u(p^e))^{k}\sum_{\substack{n\leq N\\\gamma (n)=p^e\\
n=p^em}} \frac1m \mathbf{1}_{q\mid m,\  q\, \text{prime} \implies q>p } \sigma(m)^{i \alpha} (u(m))^{k}
    \\=&\sum_{p^e\leq B}\frac{1}{p^e}\sigma(p^e)^{i \alpha} (-u(p^e))^{k}\sum_{m\leq \frac{N}{p^e}} \frac1m \mathbf{1}_{\substack{q\mid m, \,q \ \text{prime} \implies q>p }}  \sigma(m)^{i \alpha} (u(m))^{k}.
    \end{align}
    
Let $b_p(m)$ be the $1$-bounded multiplicative function defined by $$b_{p}(m):=\mathbf{1}_{q\mid m,\, q \text{ prime} \implies q>p } \sigma(m)^{i \alpha} (u(m))^{k}.$$ We will show that each of the functions $b_p(m)$ has logarithmic mean $0$. Hence, for each fixed $p^e$, the inner sum in \eqref{decompose sum} is of size $o(\log{N})$. Since $B$ is fixed, the triangle inequality then gives 
\[ \frac{1}{\log{N}} \sum_{p^e \le B} \frac{1}{p^e}\sigma(p^e)^{i \alpha} (-u(p^e))^{k}\sum_{m\leq \frac{N}{p^e}} \frac{b_p(m)}{m} = o(1). \]

It follows that \eqref{eq:tobeohof1} contributes $o(1)$. Putting this together with our earlier estimate, we find that
\[ \limsup_{N\to\infty} |T_1(N)| \le \frac{2}{B}. \]

Let us see why the $b_p$ have logarithmic mean zero. For this we apply  Proposition \ref{Prop GS}. Since $\sum_{q\text{ prime}}\frac{1}{q}$ diverges, it is enough to show that the partial sums of $\sum_{q \le x,\,\text{ prime}} \frac{\Re(b_p(q))}{q}$ remain bounded as $x\to\infty$. In fact, we will prove the stronger result that $\sum_{q\text{ prime}} \frac{\Re(b_p(q))}{q}$ converges.  

If $q\le p$, then $b_p(q)=0$, while if $q> p$,
\[ \Re(b_p(q))=\Re \left(\sigma(q)^{i \alpha} (u(q))^{k}\right)=\Re \left((q+1)^{i \alpha} \left(\frac{q}{q+1}\right)^{k}\right)=\left(\frac{q}{q+1}\right)^{k}\cos(\alpha\log(q+1)). \]

We have the following standard approximations:
$$
\left | \left (\frac{q}{q+1}\right )^k -1\right |\le \frac{k}{q+1}\quad\text{ and }\quad
\left |
\cos (\alpha \log (q+1))-
\cos (\alpha\log q)\right |
\le |\alpha| |\log (q+1) -\log q|\le \frac{|\alpha|}{q}.
$$
Hence, for $q>p$,
\[ \left|\Re(b_p(q)) -\frac{\cos(\alpha\log{q})}{q}\right| \le \frac{k}{q(q+1)} + \frac{|\alpha|}{q^2}. \]
The right hand side converges when summed on primes $q$. Thus, it suffices to show that
\[ \sum_{q\text{ prime}} \frac{\cos(\alpha\log{q})}{q} \quad\text{converges}. \]
But $\frac{\cos(\alpha\log{q})}{q} = \Re\frac{1}{q^{1+i\alpha}}$, and it is well known that $\sum_{q\,\text{ prime}} \frac{1}{q^{1+i\alpha}}$ converges for nonzero real $\alpha$ (see \cite[pp. 65-66]{Titchmarsh}, where this sum is related to the value of $\log \zeta(1+i\alpha)$).

\subsection{\bf Bounding the sum $T_2 (N)$.}

For the $n$ with a  special prime divisor $> B$, we can start with the trivial bound
\begin{equation*}
\begin{split}
\left |
\frac{1}{\log N}
\sum_{
\substack{{1<n\le N}\\
{\gamma (n)>B}}}
\frac{s(n)^{i\alpha}}{n}
\right |
&\le 
\frac{1}{\log N}
\sum_{
\substack{{1<n\le N}\\
{\gamma (n)>B}}}
\frac{1}{n}=: S.
\end{split}
\end{equation*}
 
Let $E = 10\log\log{B}$. We split the sum $S$ into three subsums: $S=S_1+S_2+S_3$, where the sum $S_1$ is over $p>B$, the sum $S_2$ is over $p\le B$ with $e\ge E$, and the sum 
$S_3$ is over $p<B$ for which $B\le p^e\le p^E$.

Now, the integers contributing to $S_1$ are $B$-sifted integers. We have
$$S_1  \le \frac{1}{\log N}\sum_{\substack{{n\le N}\\ {P^-(n)>B}}}\frac{1}{n} \le\frac{1}{\log{N}} \prod_{B < p \le N} (1-1/p)^{-1} \ll \frac{1}{\log{B}}.$$

 For $S_2$, we have
\begin{equation*}
\begin{split}
S_2 &\le \sum_{p\le B}
\sum_{E\le e\le \frac{\log N}{\log p}}
\sum_{m\le \frac{N}{p^e}}
\frac{1}{p^em\log N}\\
&\ll
\sum_{p\le B}\sum_{E\le e\le\frac{\log N}{\log p}}\frac{1}{p^e}\\
&\ll \sum_{p\le B}\frac{1}{p^E}\ll \frac{1}{2^E}\ll \frac{1}{\log B},
\end{split}
\end{equation*} 
recalling that $E=10\log\log B$.

It remains to bound the sum $S_3$, which corresponds to the prime powers $p^e>B$ with $p$ and $e$ not too large. Writing $n=p^e m$, we have:
\begin{equation*}
\begin{split}
S_3 &= \frac{1}{\log N}
\sum_{p\le B}
\sum_{\frac{\log B}{\log p}<e\le E}
\sum_{\substack{{m\le Np^{-e}}\\ {P^-(m)>p}}}
\frac{1}{p^em}\\
&\ll 
\sum_{p\le B}
\sum_{\frac{\log B}{\log p}<e\le E}\frac{1}{p^e\log p}\\
&\ll \sum_{p\le B}\frac{1}{B\log p}\ll \frac{1}{(\log B)^2}.
\end{split}
\end{equation*}

Putting together the estimates for $S_1, S_2$, and $S_3$, we conclude that $\limsup_{N\to\infty} |T_2(N)| \ll 1/\log{B}$. Combining this with our earlier result that $\limsup |T_1(N)| \le 2/B$, we find that
\[ \limsup |T(N)|  \ll \frac{1}{B} + \frac{1}{\log{B}}. \]
Since $B$ can be taken arbitrarily large, $T(N) = o(1)$. This completes the proof of Theorem \ref{thm:slogmean}.

\end{proof}

\begin{remark} An important step in the proof of Theorem \ref{th:Benford} is to apply Hal\'{a}sz's Theorem (Proposition \ref{Prop GS}) to prove that the functions $g_{\alpha,k} : n\mapsto \sigma (n)^{i\alpha} (n/\sigma (n))^k$ have logarithmic mean $0$ when $\alpha\in\R\setminus{0}$. This approach does not work in case of natural means because the functions $g_{\alpha, k}$ are $n^{i\alpha}$-pretentious in the sense of \cite{GranvilleSoundararajan}.
 
This suggests that $s(n)$ may not satisfy Benford's law with respect to natural density. In fact, we can prove this.

\begin{proposition} For every integer base $g\geq 2$, the sequence $s(n)$ is not Benford with respect to natural density in base $g$. \end{proposition}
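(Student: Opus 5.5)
We will show that $s(n)$ is not Benford in natural density by exhibiting a fixed nonzero $\alpha=2\pi\ell/\log g$ for which the natural mean of $s(n)^{i\alpha}$ fails to tend to $0$. We write
\[ s(n)^{i\alpha}=n^{i\alpha}\,h(n),\qquad h(n):=\Big(\frac{s(n)}{n}\Big)^{i\alpha}. \]
By Davenport's theorem (quoted in the proof of Lemma \ref{lem:m-r}), $s(n)/n$ has a continuous limiting distribution function $D(u)$. Hence $h(n)$ has a limiting distribution on the unit circle. More precisely, $h$ is well approximated in mean by a function of the smooth part $n_z$, as in Lemma \ref{lem: n to nz}, and therefore by a periodic function modulo $M$.

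The key step is the asymptotic
\[ \frac1N\sum_{n\le N}n^{i\alpha}h(n) = \frac{N^{i\alpha}}{1+i\alpha}\,\mu_\alpha+o(1), \qquad \mu_\alpha:=\lim_{N\to\infty}\frac1N\sum_{n\le N}h(n)=\int_0^\infty u^{i\alpha}\,dD(u). \]
To prove it, we first replace $h(n)$ by $h_z(n)=(s(n_z)/n_z)^{i\alpha}$. The error is small in $L^1$ mean by Lemma \ref{lem: n to nz} (the bound $s(n_z)/n_z\ge 1/k$ controls the derivative of $u\mapsto u^{i\alpha}$). Since $h_z$ is $M$-periodic with $M$ fixed once $z$ is fixed, partial summation over each residue class gives
\[ \sum_{n\le N,\ n\equiv a \bmod M} n^{i\alpha}=\frac{N^{1+i\alpha}}{M(1+i\alpha)}+O(1). \]
Averaging over classes yields $\frac{N^{i\alpha}}{1+i\alpha}\cdot\frac1M\sum_{a\le M}h_z(a)$. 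Finally, we let $z\to\infty$ slowly.

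Consequently, along $N=g^{j}$ the mean of $e^{2\pi i\ell\log_g s(n)}$ tends to $\mu_\alpha/(1+i\alpha)$, using $N^{i\alpha}=e^{2\pi i\ell j}=1$. Benford's law in natural density requires this limit to be $0$ by the natural-density Weyl criterion (Diaconis). The oscillating factor $N^{i\alpha}$ shows that the limit fails to exist at all unless $\mu_\alpha=0$.

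The main obstacle is proving that $\mu_\alpha\neq0$ for some $\ell\ge1$. We would argue by contradiction. If $\mu_{2\pi\ell/\log g}=0$ for every $\ell\neq0$, then $\log_g(s(n)/n)$ would be exactly uniformly distributed mod $1$ in the limit, since its distribution is a measure on $\mathbb{R}/\mathbb{Z}$ whose Fourier coefficients all vanish. We expect to contradict this using the structure of Davenport's distribution. The measure $dD$ has atoms-free but singular-type structure concentrated near values $\sigma(a)/a-1$ for small $a$. Concretely, one could show that the density of $n$ with $s(n)/n\in[1/2,\,1/2+\epsilon]$ is at least a fixed positive proportion of the odd $n$ with small $s(n)/n$ excess. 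Equivalently, one could exploit a positive proportion of $n$ with $2\|n$, $3\nmid n$ and few small prime factors, which pins $s(n)/n$ near $1/2$ and concentrates $\log_g(s(n)/n)$ more than uniform allows.

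If this concentration argument proves awkward, the fallback is a direct computation. Since $D$ is strictly increasing with a known numerical profile (for example, the density of abundant numbers, $1-D(1)\approx0.2476$), we would compare the mass of $\{\log_g(s(n)/n)\}$ in two specific arcs against their lengths, using explicit sieve bounds on $n$ divisible by $6$ versus $n$ coprime to $6$.
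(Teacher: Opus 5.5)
Your Fourier reduction is sound. Approximating $h$ by the $M$-periodic $h_z$ does give $\frac1N\sum_{n\le N}s(n)^{i\alpha}=\frac{N^{i\alpha}}{1+i\alpha}\mu_\alpha+o(1)$. So, as you say, $s(n)$ fails to be Benford in natural density if and only if the limiting law of $\{\log_g(s(n)/n)\}$ is not uniform.

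But that equivalence only relocates the content of the proposition. The step you yourself call the main obstacle is never carried out, and your hints do not do it:
\begin{itemize}
\item ``Singular-type structure'' of $dD$ is asserted, not proved, and is never connected to the coefficients $\mu_\alpha$.
\item The conditions $2\| n$, $3\nmid n$ and ``few small prime factors'' do not pin $s(n)/n$ near $\frac12$. For $n=10m$ with $m$ rough, $s(n)/n\approx 0.8$.
\item You never estimate how the mass of a window of width $\epsilon$ compares with its width as $\epsilon\to 0$, and that comparison is the whole point.
\item The numerical fallback names no arcs and no inequalities.
\end{itemize}

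The missing idea is this comparison of scales: a window of width $\epsilon$ in $s(n)/n$ carries mass $\gg 1/\log(1/\epsilon)$, whereas uniformity allows only $O(\epsilon)$.
\begin{itemize}
\item Upper bound under uniformity: $s(n)/n\in[\frac12,\frac12+\epsilon]$ forces $\{\log_g(s(n)/n)\}$ into an arc of length $\log_g(1+2\epsilon)\le 2\epsilon/\log g$. So if the law were uniform, the upper density of such $n$ would be at most $2\epsilon/\log g$.
\item Lower bound: for $n=2m$ with $P^-(m)>z:=\epsilon^{-2}$, one has $s(n)/n=\frac12+\frac32(\sigma(m)/m-1)\ge\frac12$. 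These $n$ have density $\gg 1/\log z$.
\item The first-moment bound $\sum_{m\le x,\,P^-(m)>z}(\sigma(m)/m-1)\le x\sum_{d>z}d^{-2}\ll x/z$ shows all but $O(\epsilon x)$ of them satisfy $\sigma(m)/m-1\le 2\epsilon/3$.
\item Hence the lower density of $\{n: s(n)/n\in[\frac12,\frac12+\epsilon]\}$ is $\gg 1/\log(1/\epsilon)$, which contradicts the upper bound for small $\epsilon$.
\end{itemize}

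This is precisely the paper's mechanism. The paper uses $n=6m$ with $m$ free of primes up to $g^{300}$, so $s(n)/n\in[1,1+g^{-100})$ for most such $n$. It pins $\log_g n$ by restricting to the short window $[g^N,g^N+g^{N-100}]$ rather than by your $n^{i\alpha}$-decoupling, so it needs neither the periodic approximation nor Weyl's criterion.

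Once the concentration step is supplied, your route is complete. It also gives extra information: the Weyl sums oscillate like $N^{i\alpha}\mu_\alpha/(1+i\alpha)$, matching the paper's remark about $n^{i\alpha}$-pretentiousness. Without that step, however, the proposal proves only the reduction.
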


\begin{proof} Fix a base $g \ge 2$. Recall that ``Benford in base $g$'' implies that $\{\log_g s(n)\}$ is uniformly distributed mod $1$ with respect to natural density. Roughly speaking, we show that for large $N$ the interval $I:=[g^N, g^N + g^{N-100}]$ contains many integers $n$ where $s(n)$ is slightly larger than $g^N$ -- more than that should be allowed if Benford's law holds for natural density. 

Call $n \in I$ \emph{convenient} if $n=6m$, where $m$ has no prime factor up to $g^{300}$. By inclusion-exclusion, the count of convenient $n$ is  
\[      \sim \frac{g^{N-100}}{6} \prod_{p \le g^{300}} (1-1/p) \] as $N\to\infty.$
Hence, for large $N$ this count exceeds
\[  \frac{g^{N-100}}{6} \cdot \frac{0.5}{\log(g^{300})} = \frac{g^{N-100}}{3600 \log g}. \]
Here, the product on $p$ is estimated using the explicit form of Mertens' theorem appearing as in \cite[eq.\ (3.25)]{rosser-schoenfeld}.

We claim that most convenient $n\in I$ have $s(n)$ very close to $n$, or equivalently, $\sigma(n)$ very close to $2n$. Since $6\mid n$, it is immediate that $\frac{s(n)}{n} \ge \frac{1}{n} \left(\frac{n}{6} + \frac{n}{3} + \frac{n}{2}\right) = 1$. To bound $\frac{s(n)}{n} = \frac{\sigma(n)}{n}-1$ from above, we use a first moment argument: 
\begin{align*}
  \sum_{\substack{n\in I\\n\,\text{convenient}}} \left(\frac{\sigma(n)}{n} - 2\right) &= \sum_{\substack{n\in I\\n\,\text{convenient}}} 2\left(\frac{\sigma(n/6)}{n/6}-1\right) 
        = 2 \sum_{\substack{n\in I\\n\,\text{convenient}}} \sum_{\substack{d \mid \frac{n}{6}\\d > 1}} \frac{1}{d} \le 2 \sum_{n\in I} \sum_{\substack{d\mid n\\d > g^{300}}} \frac{1}{d}\\
        &\le 2 \sum_{n \le 2g^N} \sum_{\substack{d\mid n\\d > g^{300}}} \frac{1}{d}
        \le 2 \sum_{d > g^{300}} \frac{1}{d} \sum_{\substack{n \le 2g^N\\d \mid n}} 1
        \le 4 g^N \sum_{d > g^{300}} \frac{1}{d^2}
        \le 4 g^{N-300}.\end{align*}
        
Hence, among the convenient $n \in I$, at most $4 g^{N-200}$ of them can satisfy $\frac{\sigma(n)}{n} \ge 2 + g^{-100}$. Thus, the number of convenient $n \in I$
with $s(n)/n < 1 + g^{-100}$ is at least \begin{equation}\label{eq:lb} 
   \frac{g^{N-100}}{3600 \log g} - 4 g^{N-200} = \frac{g^{N-100}}{3600 \log g} \left(1 - \frac{14400 \log g}{g^{100}}\right)
            > \frac{1}{2} \frac{g^{N-100}}{3600 \log g}. \end{equation}
Recalling the definition of $I$, all these convenient $n\in I$ with $s(n)/n < 1 + g^{-100}$ satisfy 
\[	s(n) \ge n \ge g^{N},\]
and also
\[ s(n) \le n(1+g^{-100}) \le g^{N} (1+g^{-100})^2. \]
Hence,
\[ N \le \log_g{s(n)} \le N + 2 \log_{g} (1+g^{-100}) \le N + 4 g^{-100}. \]
Therefore, the fractional part $\{\log_g s(n)\}$ belongs to the interval $[0, 4g^{-100}]$. 

If Benford's law holds for $s(n)$ with respect to natural density, then the number of $n \in I$ for which
$\{\log_g s(n)\}$ belongs to $[0, 4g^{-100}]$ is asymptotically $4g^{-100} \cdot g^{N-100} = 4g^{N-200}$ as $N\to\infty$. In particular, this number of such $n\in I$
is eventually smaller than $8g^{N-200}$. But that upper bound conflicts with the lower bound for the number of convenient $n\in I$ in \eqref{eq:lb} when $8g^{N-200} < g^{N-100}/(7200 \log g)$, or equivalently,
\[ 57600 \log g < g^{100}. \]
Since $g^{100}/\log g \ge g^{99} > 2^{99} > 57600$, we indeed have a contradiction.

\end{proof}

\end{remark}

\section{Ellipsephic $s(n)$ for composite $n$}
Throughout this section, let $x$ be large. Put 
	$y=y(x)=\exp (\sqrt{\log x})$. Recall that $\mathcal{W}_{\mathcal{D}}$ denotes the set of integers with no digit $a_0$ in their base $g$ expansion. Such integers are often called \emph{ellipsephic}.

\subsection{Preliminary lemmas}

Here we record the lemmas that we will use in our proof. The following is a special case of Lemma 2.3 in \cite{Pollack2014}.

	\begin{lemma}\label{Lemma 1-g} As $x\to\infty$, we have
	$$
	\Psi (x , y):=\#\{n \leqslant x: P^+(n) \leqslant y\} \ll x \exp \left(-\frac{\sqrt{\log x}}{2}\right).
	$$
	\end{lemma}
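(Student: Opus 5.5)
The plan is to prove the bound directly by Rankin's method. This makes the argument self-contained rather than depending on the more general statement of \cite[Lemma 2.3]{Pollack2014}. Throughout, $y=\exp(\sqrt{\log x})$, and we put $u:=\log x/\log y=\sqrt{\log x}$, so that $\log\log y=\log u$.

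The first step is the Rankin inequality. For any $0<\eta<1$ we have $1\le (x/n)^{1-\eta}$ for $n\le x$, so
\[
\Psi(x,y)\le \sum_{P^+(n)\le y}\Big(\frac{x}{n}\Big)^{1-\eta} = x^{1-\eta}\prod_{p\le y}\big(1-p^{-(1-\eta)}\big)^{-1}.
\]
Taking logarithms, the logarithm of the product is $\sum_{p\le y} p^{-1+\eta}+O(1)$, provided $\eta\le 1/3$ say, so that the higher prime-power terms converge uniformly. Writing $p^{\eta}=1+(p^\eta-1)$, this splits as
\[
\sum_{p\le y}\frac1p+\sum_{p\le y}\frac{p^{\eta}-1}{p}=\log u+O(1)+\sum_{p\le y}\frac{p^{\eta}-1}{p},
\]
using Mertens.

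The second step bounds the last sum. Since $p^\eta-1\le \eta\log p\, y^{\eta}$ for $p\le y$, we have
\[
\sum_{p\le y}\frac{p^{\eta}-1}{p}\le \eta y^{\eta}\sum_{p\le y}\frac{\log p}{p}\ll \eta\, y^{\eta}\log y .
\]
A sharper partial-summation bound $\ll y^\eta/(\eta\log y)$ is also available, but the crude one suffices.

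The third step is the choice of $\eta$; this is the only delicate point. The choice $\eta=1/(2\log y)$ produces exactly the main factor $x^{-\eta}=e^{-u/2}$, but it leaves a factor $(\log y)^{O(1)}=u^{O(1)}$ that is not absorbed. So $\eta$ must be taken larger, so that the saving dominates $\log u$. I would choose $\eta=\log(2u)/\log y$, which is $o(1)$. Then $x^{-\eta}=\exp(-u\log(2u))$ and $y^\eta=2u$, so the error sum is $\ll \log(2u)\cdot 2u$. Altogether,
\[
\Psi(x,y)\ll x\exp\big(-u\log(2u)+\log u+O(u\log u)\big).
\]
Here the $O(u\log u)$ term has to be checked to have a constant smaller than $1$. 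If it does not, I will fall back on the sharper bound $\ll y^\eta/(\eta\log y)=2u/\log(2u)$, which gives
\[
\Psi(x,y)\ll x\exp\big(-u\log(2u)+O(u/\log u)\big).
\]
For large $x$ this is at most $x\exp(-u/2)=x\exp\big(-\tfrac12\sqrt{\log x}\big)$, which proves Lemma \ref{Lemma 1-g}.

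The only real obstacle is balancing $\eta$ in the third step. The lemma claims far less than the truth, $x\,u^{-u(1+o(1))}$, so any choice of $\eta$ of order $\log u/\log y$ with a sufficiently accurate estimate of $\sum_{p\le y}(p^\eta-1)/p$ will work.
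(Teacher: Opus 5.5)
Your argument is correct, but it is not what the paper does. The paper gives no argument of its own. It simply specializes the general smooth-number bound of \cite[Lemma 2.3]{Pollack2014}, of the form $\Psi(x,y)\ll x e^{-u/2}$ with $u=\log x/\log y$, to $y=\exp(\sqrt{\log x})$. Your self-contained Rankin argument instead exploits a feature of this particular range: $\log y=u$. So the factor $\exp(\sum_{p\le y}1/p)\asymp\log y=u$ lost from the Euler product is harmless once $\eta$ is taken a bit larger than $1/(2\log y)$. A bound uniform in $x\ge y\ge 2$ needs more than the bare Rankin trick when $u$ is not large compared with $\log\log y$, and that is what the citation supplies. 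In exchange, your choice of $\eta$ gives the much stronger $\Psi(x,y)\le x\exp(-(1+o(1))u\log u)$, which the lemma does not need.

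One point should be settled rather than hedged. With $\eta=\log(2u)/\log y$ the crude estimate does \emph{not} suffice. It gives $\eta y^{\eta}\sum_{p\le y}\log p/p=2\log(2u)\,(u+O(1))$, which is twice the saving $\eta\log x=u\log(2u)$, so the resulting bound is worse than trivial. Your fallback is therefore necessary, and it deserves a line of proof. Partial summation against $\sum_{p\le t}\log p/p=\log t+O(1)$, using that $(t^\eta-1)/\log t$ is increasing, gives $\sum_{p\le y}(p^\eta-1)/p\le\int_0^{\eta\log y}\frac{e^v-1}{v}\,dv+O(1+y^\eta/\log y)\ll y^\eta/(\eta\log y)$ for $\eta\log y\ge 1$. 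Here that is $O(u/\log u)$.

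Alternatively, the cheapest fix is $\eta=1/\log y$. Then the crude estimate contributes only $O(1)$, and Rankin gives $\Psi(x,y)\ll x^{1-1/\log y}\log y=x\,u\,e^{-u}\ll xe^{-u/2}$ at once.
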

	
Next, we will need an upper bound for the count of integers divisible by a large square:
    
    \begin{lemma}\label{Lemma 2-g}
   If $x\geq z\geq 2$, then 
$$\#\left\{n \leqslant x:\, P^{+}(n)>z, P^{+}(n)^{2} \mid n\right\} \ll x/z, \,\text{ as }x\to\infty. $$
\end{lemma}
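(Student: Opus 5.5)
We bound the set by a union over primes $p>z$ of the integers $n\le x$ with $p^2\mid n$ and $P^+(n)=p$, and then apply a crude divisor count.

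Suppose $n\le x$ satisfies $P^+(n)>z$ and $P^+(n)^2\mid n$. Writing $p=P^+(n)$, we have $p>z$ and $p^2\mid n$. Therefore
$$\#\left\{n \leqslant x:\, P^{+}(n)>z,\ P^{+}(n)^{2} \mid n\right\}\le \sum_{p>z}\#\{n\le x: p^2\mid n\}\le \sum_{z<p\le \sqrt{x}} \frac{x}{p^2}.$$
Two points justify the last inequality. First, only primes with $p^2\le x$ can contribute, because otherwise the inner set is empty. Second, for those primes the count of multiples of $p^2$ up to $x$ is at most $x/p^2$.

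To bound the sum over primes, we drop primality and compare with an integral:
$$\sum_{p>z}\frac{1}{p^2}\le \sum_{m>z}\frac{1}{m^2}\le \frac{1}{z}+\int_z^\infty \frac{dt}{t^2}\ll \frac1z.$$
This gives the bound $\ll x/z$, uniformly in $2\le z\le x$.

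There is no real obstacle here. The only care needed is that the implied constant is absolute, so the bound holds uniformly in $z$, and this is clear from the estimate above.
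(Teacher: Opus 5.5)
Your proof is correct and is essentially the paper's argument: a union bound over primes $z<p\le\sqrt{x}$, the count $x/p^2$ of multiples of $p^2$ up to $x$, and the tail estimate $\sum_{p>z}p^{-2}\ll 1/z$. The only difference is that you justify the tail bound explicitly by comparison with $\int_z^\infty t^{-2}\,dt$, which the paper leaves implicit.
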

	\begin{proof} We have 
$$
\begin{aligned}
	\#\left\{n \leqslant x, P^{+}(n)>z, P^{+}(n)^{2} \mid n\right\} & \leqslant \sum_{z<p \leqslant \sqrt{x}} \sum_{\substack{n \leqslant x\\ p^2\mid n}} 1 \\
	& \leqslant \sum_{z<p} \frac{x}{p^{2}}\ll x/z.
\end{aligned}
$$
\end{proof} 
	

Finally, we will need  to count the integers $m$ in a certain range for which $s(m)$ is divisible by a fixed positive integer $q$.

\begin{lemma}\label{Lemma 3-g-old}

Let $\alpha\in (0,1)$, $\beta \in (0, \alpha/4)$, and let $q$ be a positive integer with $y^{\beta}<q\leq y $. For all $X\geq x^{\alpha}$, we have 
\begin{align*}
		\#\{x^{\alpha}< m\leq X:\, q\mid s(m)\}\ll \frac{\tau (q)}{\varphi (q)}X \log X,
	\end{align*} where $\tau$ is the divisor-counting function and $\varphi$ is Euler's totient function.
 \end{lemma}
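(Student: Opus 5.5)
The plan is to reduce to integers $m$ with a large, simply-occurring largest prime factor. For these, $m=ap$ and $s(m)=p\,s(a)+\sigma(a)$. The divisibility $q\mid s(m)$ then becomes a linear congruence for $p$ modulo $q$, and counting primes (or just integers) in residue classes gives the bound. The $\log X$ loss in the statement is generous, so trivial counts in arithmetic progressions should suffice, with no Brun--Titchmarsh needed.

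\emph{Step 1 (discarding exceptional $m$).} The integers $m\le X$ with $P^+(m)\le y$ number $\Psi(X,y)$. Since $X\ge x^{\alpha}$ and $y=\exp(\sqrt{\log x})$, we have $u=\log X/\log y\ge \alpha\sqrt{\log x}$. The standard bound $\Psi(X,y)\ll X\exp(-c\,u)$, or the argument behind Lemma \ref{Lemma 1-g}, then gives $\Psi(X,y)\ll X/y$. By Lemma \ref{Lemma 2-g} with $z=y$, the integers with $P^+(m)>y$ and $P^+(m)^2\mid m$ also number $\ll X/y$. Since $q\le y$, we have $X/y\le X/q\le \frac{\tau(q)}{\varphi(q)}X\log X$, so both exceptional sets are acceptable.

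\emph{Step 2 (the linear congruence).} Every remaining $m$ can be written uniquely as $m=ap$ with $p=P^+(m)>y$, $p\nmid a$ and $a\le X/y$. By multiplicativity,
\[ s(m)=\sigma(a)(p+1)-ap=p\,s(a)+\sigma(a). \]
Fix $a$ and put $d=\gcd(s(a),q)$. The congruence $p\,s(a)\equiv-\sigma(a)\pmod q$ is solvable only if $d\mid\sigma(a)$. In that case its solutions form a single residue class modulo $q/d$. Moreover, $d\mid s(a)$ and $d\mid\sigma(a)$ together force $d\mid \sigma(a)-s(a)=a$. For $a=1$ we have $s(a)=0$, so $d=q$, and the condition $q\mid\sigma(1)=1$ fails because $q>y^\beta>1$; hence primes $m$ never contribute. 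The number of admissible $p\le X/a$ is therefore at most $\frac{dX}{aq}+1$.

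\emph{Step 3 (summing over $a$).} The $+1$ terms, summed over $a\le X/y$, contribute $\le X/y$, which is acceptable as in Step 1. For the main terms, group the $a$ according to $d\mid q$ and use $d\mid a$:
\[ \sum_{d\mid q}\frac{d}{q}X\sum_{\substack{a\le X\\ d\mid a}}\frac1a\ll \sum_{d\mid q}\frac{X\log X}{q}=\frac{\tau(q)}{q}X\log X\le\frac{\tau(q)}{\varphi(q)}X\log X. \]

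The only step needing some care is Step 1, namely confirming that the smooth-number count $\Psi(X,y)$ is $\ll X/y$ uniformly for all $X\ge x^{\alpha}$. Lemma \ref{Lemma 1-g} is stated only at $X=x$, so I would invoke the general de Bruijn/Rankin bound $\Psi(X,y)\ll X\exp(-u/2)$ with $u\ge\alpha\sqrt{\log x}$. This gives $\Psi(X,y)\ll X\exp(-\tfrac{\alpha}{2}\sqrt{\log x})=X\,y^{-\alpha/2}$, and $y^{-\alpha/2}\ll 1/q$ holds only for $q\le y^{\alpha/2}$, not for all $q\le y$. A Rankin-type bound with its $\log u$ saving, or a comparison against the weaker target $\frac{\tau(q)}{\varphi(q)}X\log X\ge X\log X/y$, may be needed to cover the full range $y^\beta<q\le y$. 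If neither suffices, this is presumably where the hypothesis $q>y^{\beta}$ with $\beta<\alpha/4$ is meant to enter.
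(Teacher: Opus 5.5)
The gap is in Step 1, and it is the one you flag at the end. A bound $\Psi(X,y)\ll Xe^{-cu}$ with $u\ge\alpha\log y$ only yields $Xy^{-c\alpha}$. This is \emph{not} $\ll\frac{\tau(q)}{\varphi(q)}X\log X$ in general: for $q$ a prime just below $y$ and $X=x^{\alpha}$, the target is $\asymp X(\log y)^2/y$. So the assertion $\Psi(X,y)\ll X/y$ in Step 1 is unproved as written.

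Two of your proposed escapes fail. Comparing with $X\log X/y$ would need $\log X\gg y^{1-\alpha/2}$, which is false for $X\le x^{O(1)}$. The hypothesis $q>y^{\beta}$ cannot help either, since a lower bound on $q$ only makes the target smaller. The paper's proof has the same soft spot. It bounds the smooth part by $\Psi(X,y^2)\ll Xy^{-\alpha/4}$ via the argument of Lemma \ref{Lemma 1-g} and never absorbs this into the stated bound. The condition $\beta<\alpha/4$ only makes that term harmless later, when the lemma is applied to $V$ through $\frac{\tau(q)}{\varphi(q)}\ll y^{-\beta}\log x$.

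Your first option is the one that works, and it is short. Apply Rankin's trick with exponent $1-\delta$, where $\delta=\frac{\log\log y}{2\log y}$:
\[
\Psi(X,y)\le X^{1-\delta}\prod_{p\le y}\bigl(1-p^{\delta-1}\bigr)^{-1}\le X\exp\Bigl(-\delta\log X+O\bigl(\sqrt{\log y}\,\log\log\log y\bigr)\Bigr).
\]
For $p\le e^{1/\delta}$ one has $p^{\delta}\le e$. For $e^{1/\delta}<p\le y$ one has $p^{\delta}\le y^{\delta}=\sqrt{\log y}$, and the sum of $1/p$ over that range is $\ll\log\log\log y$. Since $\log x=(\log y)^2$, every $X\ge x^{\alpha}$ satisfies $\delta\log X\ge\frac{\alpha}{2}\log y\,\log\log y$. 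Hence $\Psi(X,y)\le Xy^{-2}$ for large $x$, uniformly in $X$. With this inserted, your argument proves the lemma exactly as stated, and the lower bound $q>y^{\beta}$ is never used.

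Your Steps 2--3 are correct and lighter than the paper's. The paper uses the same reduction: $\gcd(s(m_1),q)$ divides $m_1$, leaving a single class modulo the cofactor. It then counts \emph{primes} $P_1>y^2$ in that class via Brun--Titchmarsh, gaining a factor $1/\log y$ that the stated bound does not need. You count all integers in the class instead, and get $\frac{\tau(q)}{q}X\log X+X/y$ with no sieve input.
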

 \begin{proof}

	Write $m=m_1P_1$ with $P_1=P^{+}(m)$. Let $y_2=y^2$.
 If $P_1\le y_2$, then all prime factors of $m$ are less than $y_2$ and thus the number of such $m\le X$ is less than
 $\Psi (X , y_2)$ which is $\ll X\exp (-\frac{\log X}{4\sqrt{\log x}}) \leq X\exp (-\alpha\frac{\sqrt{\log x}}{4}) =Xy^{-\alpha/4}$ by Lemma \ref{Lemma 1-g}.
 By Lemma \ref{Lemma 2-g}, we can also suppose that $(P_1,m_1)=1$.  It is thus sufficient to consider the case with 
	$P_1\nmid m_1$ and  $P_1\geq y_2$. Let $A$ be the corresponding set:
 \[
 A=\{ x^\alpha < m_1P_1\le X : q|s(m_1P_1),\, P^+ (m_1)<P_1,\ P_1>y_2\}.
 \]

 Then since $s(m)=P_1s(m_1)+\sigma(m_1)$, we have 
	\[P_1s(m_1)\equiv -\sigma(m_1)\bmod{q}. \]
	Let $q_1:=\gcd(s(m_1), q)$ and $q_2=q/q_1$. We fix such a factorization $q_1q_2$ of $q$.
	As $q_1$ divides both $s(m_1)$ and $q$, it also divides $\sigma(m_1)$. As a result, 
	\[q_1\mid\sigma(m_1)-s(m_1)=m_1. \]
	Moreover, $q_2=q/q_1$ is coprime to $s(m_1)/q_1$, and so the congruence
	\[P_1\frac{s(m_1)}{q_1}\equiv- \frac{\sigma(m_1)}{q_1}\bmod{q_2}\]
	puts $P_1$ in a uniquely determined residue class modulo $q_2$. Also, since $y_2\leqslant P_1\leq X/m_1$,  
	so $X/m_1\geq y_2$.\\
	By the Brun-Titchmarsh theorem, for a given $m_1$, the number of choices for $P_1$ is 
	\[\ll \frac{X/m_1}{\varphi(q_2)\log\frac{X}{m_1q_2}}\ll\frac{X/m_1}{\varphi(q_2)\log\frac{y_2}{y}} \]
	since, by assumption, $q_2 \leq q \leq y$, we have
	\[\frac{X}{m_1q_2}\geq \frac{X}{m_1 y}\geq \frac{y_2}{y} = \frac{y^2}{y} = y.\]

	Summing over $m_1\leq X/y_2$ that are multiples of $q_1$, we have (writing $m_1=q_1k_1$)
	
	\begin{align*}
 \sum_{k_1\leq X/y_2q_1}\frac{X/(q_1k_1)}{\varphi(q_2)\log y} \leq\frac{X}{q_1\varphi(q_2)\log y} \sum_{k_1\leq X/y_2q_1}\frac{1}{k_1}\ll\frac{X \log(X/y_2q_1)}{q_1\varphi(q_2)\log y} =\frac{X \log(X/y^2q_1)}{q_1\varphi(q_2)\log y}. 
 \end{align*}
	Now, summing over the choices of $q_1$ and $q_2$ yields for $\# A$:\\
	\begin{align*}
 \# A&\ll\frac{X}{\log y}\sum_{q_1q_2=q}\frac{\log(X/y^2q_1)}{q_1\varphi(q_2)} \\ &\ll\frac{X\log(X/y^2)}{\log y}\sum_{q_1q_2=q}\frac{1}{q_1\varphi(q_2)}. 
 \end{align*}
 
 As, the sum on $q_1,q_2$ is a convolution product
 \[ \sum_{q_1q_2=q}\frac{1}{q_1\varphi (q_2)} =\frac{1}{q}\sum_{q_2|q}\frac{q_2}{\varphi (q_2)}\le \frac{\tau (q)}{\varphi (q)},\]
 we deduce that \[\# A \ll \frac{\tau (q)}{\varphi (q)}{X\log X}.
\]

\end{proof}
 
\subsection{Proof of Theorem \ref{theorem-general}}
In the case where $g = 2$, the result trivially holds. Namely, since $a_0 \in \{1,...,g-1\}$, we must have $a_0 = 1$ when $g = 2$. Observe that there are no positive integers whose binary expansions contain no digit $1$. Since $s(n) > 0$ for all composite values of $n$, it follows that the count of composite $n \leq x$ for which $s(n)$ has no $1$ in its base-$2$ expansion is $0$. 

For the remainder of the proof, let $g \geq 3.$ We write $n=Pm$ where $P=P^+(n)$. By Lemmas \ref{Lemma 1-g} and \ref{Lemma 2-g}, we can suppose that $P\geq y$ and $(P,m)=1$.
The last assumption $(P,m)=1$ implies that 
\begin{equation}
\label{Pm}
s(n)=Ps(m)+\sigma (m). 
\end{equation}

First we handle the case $m\leqslant x^{\alpha_g}$ with $\alpha_g ={\frac{1}{2g^2\log^2 g}}\leq 1/4$. 
We consider
\[
E_1=\#\left\{
mP\leqslant x : m\leqslant x^{\alpha_g}  \text{ and } s(mP) \text{ has no $a_0$ in its base $g$ expansion}
\right\}.
\]
We will prove that
\begin{equation}\label{maj:E1}
E_1\ll x^{1-\frac{1}{2g^2\log ^2 g}}.
\end{equation}
First we have:
\[
E_1=\sum_{m\leqslant x^{\alpha_g}}
\#\left\{
P\leqslant x/m : s(mP) \text{ has no $a_0$ in its base $g$ expansion}
\right\}.
\]

Since $a:=s(n)\leqslant\sigma (n)\le n\tau (n)$, we have $\sigma(n)\ll n^{1+\varepsilon}$ for all $\varepsilon >0$. In particular for $mP\leqslant x$ with $P\le x$, $m\le x^{\alpha_g}$ as before,
\eqref{Pm} implies that $s(Pm)\leqslant x^{1+\frac{1}{g\log g}}$, for large enough $x$.
 If $m$ is fixed and  $a$ is given, there is at most one $P\leqslant x/m$
 such that $s(mP)=a$. In other words we have for any $m\leqslant x^{\alpha_g}$:
 \begin{equation*}
 \begin{split}
 &\#\{ P\le x/m : s(mP) \text{ has no $a_0$ in its base $g$ expansion}\}\\
 &\leqslant\#\left\{
 a\leqslant x^{1+\frac{1}{g\log g}}
 \text{ such that $a$ has no $a_0$ in its base $g$ expansion}
 \right\}.
 \end{split}
 \end{equation*}
Thus
\begin{equation}\label{small_m}
\begin{split}
E_1&\ll\sum_{m\leqslant x^{\alpha_g}}\#\{a\le x^{1+\frac{1}{g\log  g}}\ :a \text{ has no $a_0$ in its base $g$ expansion}\}.
\end{split}
\end{equation}

For any $X\ge 2$, the number of $a\leqslant X$ with no $a_0$ in base $g$ is $O((g-1)^N)$ where $N$ is the smallest integer such that $X\le g^N$  (ie. $N=\left\lceil \frac{\log X}{\log g}\right\rceil$).
Inserting this into \eqref{small_m} allows us to obtain

\begin{align*}
E_1&\leq \sum_{m\le x^{\alpha_g}}x^{(1+\frac{1}{g\log g})\frac{\log (g-1)}{\log g}}
\end{align*}
Note that $\frac{\log (g-1)}{\log g}=1+\frac{\log\left(1-\frac1g\right)}{\log g}$ which implies
\[\left(1+\frac{1}{g\log g}\right)\frac{\log (g-1)}{\log g} \leq 1-\frac{1}{g^2\log ^2g}.\]
Thus, recalling $\alpha_g =\frac{1}{2g^2\log ^2g}$, we obtain \eqref{maj:E1}
\begin{align*}
E_1&\ll  x^{\alpha_g+1-\frac{1}{g^2\log^2 g}} =x^{1-\frac{1}{2g^2\log ^2g}}.
\end{align*}

We now suppose that $m> x^{\alpha_g}$.
Let $k\in\mathbb{Z}$ such that $g^{k} \leqslant y<g^{k+1}$. Then  for $\mathcal{D}=\{0,1,\dots,g-1\}\setminus\{a_0\},$
		
		$$\#\{m P\leqslant x: m>x^{\alpha_g},\, P> y,\,P\nmid m,\,  s(mP) \text{ with no digit $a_0$} \}\leqslant \sum_{\substack{b<g^{k}\\b\in \mathcal{W}_{\mathcal {D}}}} \sum_{x^{\alpha_g}<m\leq x/y } \sum_{\substack{y<P\leq x/m\\P\nmid m\\s(mP) \equiv b \bmod {g^k} }} 1.
		$$
		
Noting \eqref{Pm}, $s(n)=s(mP)=Ps(m)+\sigma(m) \equiv b \bmod {g^{k}}$.

Let $d=\gcd(s(m), g^{k}). $
	We first consider the case where the gcd $d$ is small; that is, $d \leq y_{1}$ with $y_{1}= y^{2r\beta}$ and $\beta\leq \alpha_g/4$, where $r= \omega(g)$ with $\omega(g)$ denoting the number of distinct prime divisors of $g$. Then
	$$
	\frac{Ps(m)}{d}\equiv\frac{b-\sigma(m)}{d} \bmod{\frac{g^{k}}{d}}.
	$$
	Now, trivially, 
	$$\#\left\{y<P \leq \frac{x}{m}: 
	\frac{Ps(m)}{d}\equiv\frac{b-\sigma(m)}{d} \bmod{\frac{g^{k}}{d}}
	 \right\} \leqslant \frac{x/m}{g^{k} / d}.$$
Summing over $m$ and  $d$ yields 
	
\begin{align*}
	\sum_{x^{\alpha_g}<m\leq x/y} \sum_{1\le d \le y_{1}} \frac{x}{m} \frac{d}{g^{k}} \le& \frac{x}{g^k}\sum_{x^{\alpha_g}<m\leq x/y}\frac{1}{m} \sum_{1\le d \le y_{1}}d\\\le& 
 \frac{xy_1^2 \log{x/y}}{g^ky}\ll \frac{x}{y^{1/2}}.
\end{align*}
The final inequality follows from the fact that we have chosen $k$ such that $g^k\leqslant y<g^{k+1}$, thus $1/g^k\leqslant g/y\ll 1/y$.
Finally, summing over $b$, 
\begin{align*}
	\sum_{\substack{b\in \mathcal{W}_{\mathcal{D}}(g^k)}} \frac{x}{y^{1/2}}\leq \frac{x}{y^{1/2}}(g^k)^{\frac{\log \#\mathcal{D}}{\log g}}\ll x/y^{1/2-\frac{\log (g-1)}{\log g}}.
\end{align*}

Now, we consider the contribution from $m$, $P$, and $b$ when $(s(m), g^{k})>y_{1}$.
Let $V$ denote this contribution:
\[
V=\#\{ mP\le x : (s(m), g^k)\ge y_1,\ P>y, m>x^{\alpha_g},\ (P,m)=1\}.
\]
Since $mP\le x$ and $m>x^{\alpha_g}$, we have $P\le x^{1-\alpha_g}$.

In order to apply Lemma \ref{Lemma 3-g-old} with a convenient modulus $q$, 
 we use the prime factor decomposition of $g$:
$$g=p_1^{\gamma_1}\cdots p_r^{\gamma_r},$$
where $p_j$ are primes with $p_i\not = p_j$ if $i\not =j$ and $\gamma_1,\ldots \gamma_r\in\N$.
If $\gcd(s(m),g^k)\ge y_1$, then there exists $i\in\{ 1,\ldots ,r\}$ such that 
$\gcd(s(m), p_i^{k\gamma_i})>y_1^{1/r}$. This implies that $p_i^{\ell_i} |s(m)$ for some $\ell_i\in \mathbb{N}$ and we can choose $\ell_i$ with $p_i^{\ell_i}\le y_1^{1/r}<p_i^{\ell_i +1}$.
Thus we have
\[
V\ll \sum_{y<P<x^{1-\alpha_g}}\#\{ x^{\alpha_g} \le m\le x/P : p_i^{\ell _i}|s(m)\}.
\]

Now, we apply  Lemma \ref{Lemma 3-g-old} with $q=p_i^{\ell_i}>y_1^{1/(2r)}=y^{\beta_g}$ and $X=x/P$.
Since $$\frac{\tau (p_i^{\ell_i})}{\varphi (p_i^{\ell _i})} =\frac{1+\ell_i}{p_i^{\ell_i -1}(p_i-1)}\ll  y_1^{-1/r}\log (y_1)\ll y^{-2\beta_g}\log y\ll y^{-\beta_g}\log x,$$ we obtain

\[
V\ll \sum_{y<P<x^{1-\alpha_g}} \frac{x (\log x)^2}{P y^{\beta_g}}.
\]
Summing over $P$, and recalling that $y=\exp(\sqrt{\log x}),$ we obtain, for some constant $c>0$, 
\[
V\ll x\exp{(-c\sqrt{\log x})}.
\]

\appendix

\section{Extending Theorem \ref{thm: bcddt} to $g = 2$}\label{appendix}

In this appendix, we prove the omitted case $g=2$ of Theorem \ref{thm: bcddt}, thereby completing the proof of the theorem in every base.

Observe that, when $g = 2$, we must have $\mathcal{D} = \{1\},$ since there are no positive integers whose binary expansions contain only $0$'s. The positive integers whose binary expansions contain only the digit $1$ are precisely $2^{j} - 1$ for $j \geq 1.$ Thus, we want to count the values of $n \leq x$ for which $$s(n) = 2^j-1.$$ Let $$ K = \left\lceil \frac{2(\log\log x)^\gamma}{\log 2}\right\rceil.$$

If $2^K \mid \sigma(n)$, then $$n = \sigma(n) - s(n) \equiv 1 - 2^j \pmod{2^K}.$$ As $j$ varies, there are at most $K$ possible residue classes $\pmod{2^K}$ that $n$ can belong to: the classes $1 - 2^j$ for $1 \leq j < K$ and the class $1 \pmod{2^K}$ for all $j \geq K$. Consequently, this case contributes $$K \left(\frac{x}{2^K} + 1\right) \ll xe^{-(\log\log x)^\gamma}$$ values of $n$ to the count.

If $2^K \nmid \sigma(n)$, define $$R(n):= \#\{p: p \ \mathrm{odd \ prime}, \nu_p(n) \ \mathrm{odd}\}.$$ Since $$\sigma(n)= \sigma(2^{\nu_2(n)})\prod_{\substack{p\mid n\\ p\ \mathrm{odd}}}\sigma\!\left(p^{\nu_p(n)}\right),$$ where $\sigma(2^{\nu_2(n)})$ is odd, and $\sigma(p^{\nu_p(n)})$ is even whenever $\nu_p(n)$ is odd. So, each odd prime  divisor of $n$ with an odd $\nu_p(n)$, there are $R(n)$ many of them, contributes at least one factor of $2$ to $\sigma(n)$. Therefore, we have 
$$\nu_2(\sigma(n)) \ge R(n).$$ Since $2^K \nmid \sigma(n)$, we have $$\nu_2(\sigma(n)) < K,$$ hence $R(n) < K.$ Now, we estimate how many integers $n\leq x$ there are with $R(n) < K$. Fix $t \in (0, 1)$, say $t = 1/2.$ Then we have $$\mathbf{1}_{\{R(n) < K\}} \leq t^{R(n)-K}.$$ Indeed, if $R(n)<K$, then $R(n)-K<0$, and since $0<t<1$, we have $t^{R(n)-K} \geq 1$. Note that $(\frac{1}{2})^{R(n)}$ is multiplicative and, by the Selberg-Delange method (see \cite[Theorem II.5.3]{TenenbaumITAN22}), it has mean value $$\sum_{n \leq x} \left(\frac{1}{2}\right)^{R(n)} \ll \frac{x}{\sqrt{\log x}}.$$ Thus, we may conclude that $$\#\{n \leq x: 2^K \nmid \sigma(n)\} \ll 2^K \frac{x}{\sqrt{\log x}} \ll xe^{-(\log\log x)^\gamma},$$ since $\gamma < 1.$

\section*{Acknowledgements}
We would like to acknowledge ANR grant ANR-20-CE91-0006  ArithRand and the Utrecht University Mathematics Institute for providing funding for us to meet up in Nancy and Utrecht in order to work on this project. A portion of this manuscript was written while the fifth author was on sabbatical at the Max Planck Institute for Mathematics and the Centre de Recherches Mathématiques. She would like to thank both institutions for providing her with a pleasant working environment.

\end{document}